\documentclass{amsart}
\usepackage{amssymb, latexsym, euscript}
\usepackage{color}


      \def\dC{{\mathbb C}}

   \def\dN{{\mathbb N}}   
      \def\dR{{\mathbb R}}

      \def\cF{{\mathcal F}}

\def\wh#1{{{\widehat #1} }}

\def\bm\chi{\mbox{\boldmath$\chi$}}

\def\RE{{\rm Re\,}}
\def\IM{{\rm Im\,}}

\def\sgn{{\rm sgn\,}}
\def\supp{{\rm supp\,}}

\def\cmr{{\dC \setminus \dR}}

\newcommand{\al}{\alpha}
\newcommand{\be}{\beta}

\newcommand{\ep}{\varepsilon}

\newcommand{\sig}{\sigma}

\renewcommand{\Re}{\hbox{\rm Re }}
\renewcommand{\Im}{\hbox{\rm Im }}

 \DeclareMathOperator{\ii}{i}

\newtheorem{theorem}{Theorem}[section]
\newtheorem{proposition}[theorem]{Proposition}
\newtheorem{corollary}[theorem]{Corollary}
\newtheorem{lemma}[theorem]{Lemma}

\theoremstyle{definition}
\newtheorem{example}[theorem]{Example}
\newtheorem{remark}[theorem]{Remark}

\numberwithin{equation}{section}

\begin{document}
\ifx\JPicScale\undefined\def\JPicScale{1}\fi \unitlength\JPicScale mm

\title[Generalized Nevanlinna functions with one negative square]
{Zeros of nonpositive type of  generalized Nevanlinna functions with one negative square}

\author[H.S.V.~de~Snoo]{Henk de Snoo}
\address{Johann Bernoulli Institute for  Mathematics and Computer Science\\
University of Groningen \\
P.O. Box 407, 9700 AK Groningen \\
Nederland} \email{desnoo@math.rug.nl}

\author[H.~Winkler]{Henrik Winkler}
\address{
Institut f\"{u}r Mathematik\\
Technische Universit\"at Ilmenau \\
Curiebau, Weimarer Str.~25, 98693 Ilmenau \\
Germany} \email{henrik.winkler@tu-ilmenau.de}

\author[M.~Wojtylak]{Micha\l{} Wojtylak}
 \address{
Institute of Mathematics\\
Jagiellonian University\\
 \L ojasiewicza 6\\
 30-348 Krak\'ow\\
Poland} \email{michal.wojtylak@gmail.com}



\thanks{
The first author thanks the Deutsche Forschungsgemeinschaft (DFG) for the Mercator visiting professorship at the Technische Universit\"at Berlin. The third author expresses his gratitude to
the Rijksuniversiteit Groningen and the VU University Amsterdam, where the
research has been partially carried out. He also gratefully acknowledges the assistance of the Polish Ministry of Science and Higher Education grant NN201 546438.}


\keywords{Pontryagin space, generalized Nevanlinna function, generalized pole of nonpositive type,
generalized zero of nonpositive type, integral representation, fractional linear transformation}

\subjclass[2000]{Primary 47A05, 47A06}

\begin{abstract}
A generalized Nevanlinna function $Q(z)$ with one negative square has precisely 
one generalized zero of nonpositive type in the closed extended upper halfplane. 
The fractional linear transformation defined by $Q_\tau(z)=(Q(z)-\tau)/(1+\tau Q(z))$, 
$\tau \in \dR \cup \{\infty\}$, is a generalized Nevanlinna function with one negative square.  
Its generalized zero of nonpositive type $\alpha(\tau)$ as a function of $\tau$ defines 
a path in the closed upper halfplane.  Various properties  of this path are studied in detail.
\end{abstract}

\maketitle

\section{Introduction}

The class $\mathbf{N}_1$ of \textit{generalized Nevanlinna functions with one negative square} 
is the set of all scalar functions $Q$ which are meromorphic on $\cmr$, which satisfy
 $Q(\overline{z})=\overline{Q(z)}$, and for which the kernel
\begin{equation}\label{1.0}
\frac{Q(z)-\overline{Q(w)}}{z-\bar{w}},
\end{equation}
has one negative square, see \cite{KL73, KL77, KL81, L}.  The class $\mathbf{N}_0$ of ordinary Nevanlinna
functions consists of functions holomorphic on $\cmr$, which satisfy $Q(\overline{z})=\overline{Q(z)}$, and for which the above
kernel has no negative squares, i.e., $\IM Q(z) / \IM z \ge 0$, $z \in \cmr$; cf. \cite{donoghue}. Let $Q(z)$
belong to  $\mathbf{N}_1$. A point $z_0\in\dC^+\cup\dR\cup\{\infty\}$ is a \textit{generalized zero of
nonpositive type} (\textit{GZNT}) of $Q(z)$  if  either $z_0\in\dC^+$ and
\begin{equation}
Q(z_0)=0,
\end{equation}
or $z_0\in\dR$ and
\begin{equation}\label{x_0}
\lim_{z \wh\to z_0}\frac{Q(z)}{z-z_0}\in (-\infty,0],
\end{equation}
or $z_0=\infty$ and
\begin{equation}\label{x_00}
\lim_{z \wh\to \infty} zQ(z) \in [0,\infty),
\end{equation}
see \cite[Theorem 3.1, Theorem 3.1']{L}.
Here the symbol $\wh\to$ denotes the non-tangential limit.
If $Q(z)$ is holomorphic in a neighborhood of
$z_0\in\dR$, then \eqref{x_0}  simplifies to $Q(z_0)=0$ and
$Q'(z_0)\leq0$. Any function $Q(z) \in \mathbf{N}_1$ has precisely one GZNT in
$\dC^+\cup\dR\cup\{\infty\}$; cf. \cite{KL81}.

Each generalized Nevanlinna function with one negative square is the Weyl function of a closed symmetric
operator or relation with defect numbers $(1,1)$ in a Pontryagin space with one negative square, see
\cite{BDHS,BHSWW,DHS1}. The selfadjoint extensions of the symmetric operator or  relation are parametrized
over $\dR \cup \{\infty\}$; in fact, each selfadjoint extension corresponding to $\tau \in \dR \cup
\{\infty\}$ has a Weyl function of the form:
\begin{equation}\label{bil}
Q_\tau(z)=\frac{Q(z)-\tau}{1+\tau Q(z)}, \quad \tau \in \dR,\quad \mbox{and} \quad Q_\infty(z)=-\frac1{Q(z)}.
\end{equation}
The transform \eqref{bil} takes the class  $\mathbf{N}_1$ onto itself as follows from calculating the kernel
corresponding to \eqref{1.0}. Hence, for each $\tau \in \dR \cup \{\infty\}$ the function $Q_\tau(z)$ has a
unique GZNT, denoted by $\alpha(\tau)$. The study of the path $\tau \mapsto \alpha(\tau)$, $\tau \in \dR \cup
\{\infty\}$, was initiated in \cite{DHS1}. Some simple examples of functions in $\mathbf{N}_1$  may help to
illustrate the various possibilities; cf. Theorem \ref{factor}.

First consider the $\mathbf{N}_1$--function $Q(z)=-z$.  It has a GZNT at the origin. For $\tau \in \dR$ the equation $Q_\tau(z)=0$ has one solution; hence the
path $\alpha(\tau)$ of the GZNT   is given by
\[
 \alpha(\tau)=-\tau, \quad \tau \in \dR.
\]
Therefore  $\alpha(\tau)$ stays on the (extended) real line; cf. Section \ref{ontherealline}.

The function $Q(z)=z^2$ provides another simple example of a function belonging to $\mathbf{N}_1$. Observe that it
has a GZNT at the origin, hence  $\alpha(0)=0$. For $\tau>0$ the equation  $Q_\tau(z)=0$ has two solutions, namely
$-\sqrt{\tau}$ and $\sqrt{\tau}$. Since $Q_\tau'(z)$ exists and is negative (positive) on the negative
(positive) half-axes, it follows that the path $\alpha(\tau)$ of the GZNT is given by
 \[
 \alpha(\tau)=-\sqrt\tau, \quad \tau>0.
 \]
For $\tau<0$ the equation $Q_\tau(z)=0$ has precisely one solution in $\dC^+$,  so that the path
$\alpha(\tau)$ of the GZNT is given by
 \[
 \alpha(\tau)=\ii\sqrt{-\tau}, \quad \tau<0.
 \]
Hence the path approaches the real line
vertically at the origin  and continues along the negative axis; see Figure 1. Finally, note that  $\alpha(\infty)=\infty$.

The function $Q(z)=z^3$ also belongs to $\mathbf{N}_1$ with a GZNT at the origin. For $\tau \in \dR$
the equation $Q_\tau(z)=0$ has three solutions. An argument similar to the one above shows that  the path of
the GZNT of $\alpha(\tau)$ is given by
\[
\alpha(\tau)=\frac{  -\sgn{\tau} +  \sqrt{3}\ii }2\,
 \sqrt[3]{\tau}, \quad \tau\in\dR.
\]
Therefore the path approaches the real line under an angle $\pi/3$ and leaves the real line under an angle
$2\pi/3$; see Figure 1.

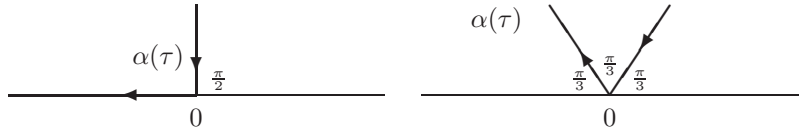
\begin{figure}[htb]
\begin{center}
\begin{picture}(105,20)(0,0)
\linethickness{0.05mm} \put(0,5){\line(1,0){50}}
                       \put(55,5){\line(1,0){50}}
 \thicklines           \put(25,5){\vector(-1,0){10}}
                       \put(15,5){\line(-1,0){15}}
                       \put(25,17){\vector(0,-1){9}}
                       \put(25,14){\line(0,-1){9}}
                       \put(28,7){\makebox(0,0)[cc]{\tiny $\frac{\pi}2$}}
                      \put(20,10){\makebox(0,0)[cc]{$\alpha(\tau)$}}
                        \put(25,02){\makebox(0,0)[cc]{$0$}}
\thicklines         \put(88,17){\vector(-2,-3){4}}
                      \put(84,11){\line(-2,-3){4}}
                      \put(80,5){\vector(-2,3){4}}
                      \put(76,11){\line(-2,3){4}}
                     \put(65,15){\makebox(0,0)[cc]{$\alpha(\tau)$}}
                     \put(80,02){\makebox(0,0)[cc]{$0$}}
                     \put(80,9){\makebox(0,0)[cc]{\tiny $\frac{\pi}3$}}
                    \put(84,7){\makebox(0,0)[cc]{\tiny $\frac{\pi}3$}}
                   \put(76,7){\makebox(0,0)[cc]{\tiny $\frac{\pi}3$}}
\end{picture}
\end{center}
\caption{The path of $\alpha(\tau)$ for $Q(z)=z^2$ and for $Q(z)=z^3$.}
\end{figure}

The previous examples were about functions $Q(z) \in  \mathbf{N}_1$ which have a GZNT on the real axis and
such that $Q(z)$ is holomorphic in a neighborhood of the GZNT. If $Q(z)$ is not holomorphic at its GZNT, the behaviour may be quite different.  For instance,  consider the function $Q(z)=z^{2+\rho}$
where $0<\rho<1$  and the branch is chosen to make $Q(z)$
holomorphic and positive on the positive axis. Then $Q(z)$ belongs to $\mathbf{N}_1$
with a GZNT at the origin.  The path $\alpha(\tau)$ now approaches $z=0$  via the angles
$\pi/(2+\rho)$ and $2\pi/(2+\rho)$, since
\[
 \alpha(\tau)=(-\tau)^{1/(2+\rho)} e^{i \pi /(2+\rho)}, \quad \tau<0, \quad
  \alpha(\tau)=(\tau)^{1/(2+\rho)} e^{i 2\pi /(2+\rho)}, \quad \tau>0,
\]
see Figure 2.

\begin{figure}[hbt]
 \begin{center}
\begin{picture}(100,20)(0,0)
\linethickness{0.05mm} \put(0,5){\vector(1,0){100}}
                       \put(55,4){\line(0,1){2}}
                        \put(55,2){\makebox(0,0)[cc]{$\alpha(0)=0$}}
 \linethickness{0.6mm} \put(0,5){\line(1,0){55}}
                      \put(5,2){\makebox(0,0)[cc]{$\supp\sigma$}}
 \thicklines         \put(63,17){\vector(-2,-3){4}}
                      \put(59,11){\line(-2,-3){4}}
                      \put(55,5){\vector(-2,3){4}}
                       \put(55,5){\line(-2,3){8}}
                     \put(40,15){\makebox(0,0)[cc]{$\alpha(\tau)$}}
                     \put(55,11){\makebox(0,0)[cc]{\tiny $\frac\pi{2+\rho}$}}
                    \put(61,7){\makebox(0,0)[cc]{\tiny $\frac\pi{2+\rho}$}}
\end{picture}
\end{center}
\caption{The path of $\alpha(\tau)$ for $Q(z)=z^{2+\rho}$, $0< \rho <1$. }\label{nons}
\end{figure}
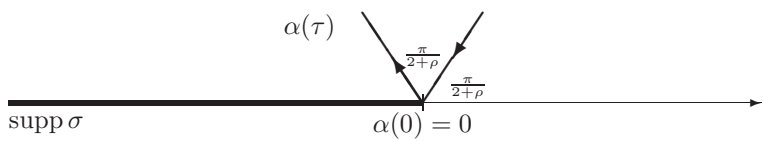

As a final  example,  consider the function
\begin{equation}\label{notonR}
 Q(z)=  \frac{z^2+4}{z^2+1 }\,\,\ii.
\end{equation}
This function belongs to $\mathbf{N}_1$ and it has a GZNT at $z=2\ii$. The equation $Q(z)=\tau$ has a nonreal
solution for each $\tau\in\dR\cup\{\infty\}$. In fact, the path of the GZNT $\alpha(\tau)$ is a simple closed
curve bounded away from the real axis.

In general the path of the GZNT has a complicated behaviour. The path may come from $\dC^+$, be part of the
real line, and then leave again to $\dC^+$, it may approach the real line in different ways, it may stay
completely on the real line, or it may stay away  boundedly   from the real line. In the present paper, some
aspects of the path are treated. Especially, the local behavior of $\alpha(\tau)$ is completely determined in
the domain of holomorphy of the function $Q(z)$. Furthermore under certain holomorphy conditions it is shown
that a small interval of the real line is part of the path of $\alpha(\tau)$. Finally, the case where the
path stays on the extended real line is completely characterized.

The contents of this paper are now described.  Section 2 contains some preliminary observations concerning
generalized Nevanlinna functions with one negative square.  Furthermore a useful version of the inverse
function theorem is recalled.  Some elementary notions concerning the path $\alpha(\tau)$ of the GZNT of
$Q_\tau(z)$ are presented in Section 3. In Section 4 the function $Q(z)$ is assumed to be holomorphic in a
neighborhood of a real GZNT and the path $\alpha(\tau)$ of  the GZNT is studied in such a neighborhood. In
Section 5 necessary and sufficient conditions are given so that a left or right neighborhood of a GZNT of
$Q(z)$ belongs to the path $\alpha(\tau)$. Section \ref{ontherealline} is devoted to the case where the GZNT stays on the
extended real line. A complete characterization is given.

The present paper has points of contact with \cite{JL83,JL95} and \cite{HSSW}.
An example of a path as described in the present paper can be found in
\cite{KL71}.  Furthermore, it should be pointed out that there are strong
connections to the recent perturbation analysis
in \cite{MMRR1,MMRR2,RanWojtylak}.
The authors thank Vladimir Derkach and Seppo Hassi, who have influenced 
this paper in more than one way.

\section{Preliminaries}

\subsection{Nevanlinna functions}

Let $M(z)$ belong to $\mathbf{N}_0$, i.e. $M(z)$ is a  Nevanlinna function (without any negative squares).
Then $M(z)$ has the usual integral representation
\begin{equation}\label{nev'}
M(z)= a+ b z+ \int_\dR \left(\frac{1}{s-z}-\frac{s}{s^2+1}\right) \,d\sigma(s),\quad
z\in\dC\setminus\mathbb{R},
\end{equation}
where $a \in \dR$, $b \ge 0$, and $\sigma$ is a nondecreasing function with
\begin{equation}
\label{int'} \int_\dR \frac{d \sigma(s)}{s^2+1}  < \infty.
\end{equation}
Since the function $\sigma$ can possess jump discontinuities the following normalization is used:
\[
  \sigma(t)=\frac{\sigma(t+0)+\sigma(t-0)}{2}.
\]
In addition, it is assumed that $\sigma(0)=0$. Note that $a$ and $b$ can be recovered from the function
$M(z)$ by
\begin{equation}\label{nev+}
 a=\Re M(i), \quad b = \lim_{ z \wh\to \infty} \frac{M(z)}{z}.
\end{equation}
 Likewise, the function $\sigma$ can be recovered from the
function $M(z)$ by the Stieltjes inversion formula:
\[
  \sigma(t_2)-\sigma(t_1)=\lim_{\varepsilon \downarrow
  0} \frac{1}{\pi} \int_{t_1}^{t_2} \IM
  M(x+i\varepsilon)\,dx, \quad t_1 \leq t_2,
\]
cf. \cite{donoghue}, \cite{KK}.
If $\sigma(s)$ is constant on $(\gamma,\delta)\subseteq\dR$, then $(\gamma,\delta)$  will be called a
\textit{gap} of $\sigma$ or of $M(z)$.  Note that in this case $M(z)$ given by \eqref{nev'} is well--defined
for $z\in(\gamma,\delta)$. By the  Schwarz reflection principle  $M(z)$ is also holomorphic on
$\dC^+\cup\dC^-\cup(\gamma,\delta)$. Conversely, if the function $M(z)$ given by \eqref{nev'} is holomorphic
on some interval $(\gamma,\delta)\subseteq\dR$ then $\sigma(s)$ is constant on that interval. Furthermore,
observe that if $M(z)$ is holomorphic at $z\in\dC$ then
\begin{equation}\label{gapp}
M'(z)=b+\int_{\dR}\frac{d\,\sigma(t)}{(t-z)^2}.
\end{equation}
The symbols $z \downarrow \gamma$ and $z \uparrow \delta$ will stand for  the approximation of $\gamma$ and $\delta$ along $\dR$ from above and below, respectively. So
\begin{equation}\label{rreff}
M(\gamma+)=\lim_{z\downarrow\gamma} M(z) \in [-\infty,
\infty),\quad M(\delta-)=\lim_{z\uparrow\delta}M(z)\in (-\infty, \infty].
\end{equation}
Recall that these limits are equal to the nontangential limits at $\gamma$ and $\delta$, respectively;
see \cite{donoghue}.

The function $\sigma(s)$ introduces in a natural way a measure on $\mathbb{R}$, which is denoted by the same symbol. The formula for the point mass
\begin{equation}\label{nev++}
 \sigma(\{c\})=\sigma(c+0)-\sigma(c-0)= \lim_{z \wh \to c}\, (c-z)M(z),\quad c \in\dR,
\end{equation}
 complements the limit formula in \eqref{nev+}.

The following result is based on a careful analysis of the relationship of the limiting behaviour of the
imaginary part of $M(z)$ and the behaviour of the spectral function $\sigma(s)$ in \eqref{nev'}; see
\cite{donoghue} for details.

\begin{theorem}\label{limits}
Let $M(z)$ be a Nevanlinna function and let $(\gamma,\delta) \subset \dR$ be a finite interval. If
\[
  \lim_{y \downarrow 0} \,\Im M(x+ \ii y)=0,\quad x\in(\gamma,\delta),
\]
then $M(z)$ is holomorphic on $(\gamma,\delta)$.
\end{theorem}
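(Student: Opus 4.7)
The plan is to reduce the claim to showing $\sigma((\gamma,\delta))=0$, where $\sigma$ is the spectral measure in the integral representation \eqref{nev'} of $M(z)$; by the gap property discussed after \eqref{rreff}, vanishing of $\sigma$ on $(\gamma,\delta)$ immediately yields holomorphy of $M(z)$ there. First I would eliminate point masses on $(\gamma,\delta)$: for $c\in(\gamma,\delta)$, applying \eqref{nev++} along $z=c+\ii y$ gives
\begin{equation*}
\sigma(\{c\})=\lim_{y\downarrow 0}(-\ii y)M(c+\ii y),
\end{equation*}
whose real part is $\lim_{y\downarrow 0}y\,\Im M(c+\ii y)$; this vanishes since $\Im M(c+\ii y)\to 0$ by hypothesis.

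For the continuous part I would localize. Fix $[t_1,t_2]\subset(\gamma,\delta)$, choose $\gamma<\gamma'<t_1<t_2<\delta'<\delta$, and split $\sigma=\sigma_1+\sigma_2$, where $\sigma_1$ is the restriction of $\sigma$ to $[\gamma',\delta']$. Writing correspondingly $M(z)=M_1(z)+M_2(z)$, with $\sigma_1,\sigma_2$ as spectral measures and the terms $a$ and $bz$ absorbed into $M_2$, the function $M_2(z)$ is holomorphic on a complex neighborhood of $(\gamma',\delta')$ and real-valued on $(\gamma',\delta')$, because $\sigma_2$ does not charge $[\gamma',\delta']$. Hence $\Im M_2(x+\ii y)\to 0$ uniformly on compact subsets of $(\gamma',\delta')$, and subtracting from the hypothesis yields $\Im M_1(x+\ii y)\to 0$ pointwise on $(\gamma',\delta')$.

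Now $\Im M_1(x+\ii y)/\pi$ is the Poisson integral of the \emph{finite} positive Borel measure $\sigma_1$. I would finish by invoking the classical pointwise boundary-value theorems for Poisson integrals of finite measures (see \cite{donoghue}): the nontangential limit equals the Radon--Nikodym derivative $d\sigma_{1,ac}/dx$ at Lebesgue-a.e.\ $x$, and equals $+\infty$ at $\sigma_{1,s}$-a.e.\ $x$. Convergence to the finite value $0$ at \emph{every} point of $(\gamma',\delta')$ therefore forces the absolutely continuous density to vanish a.e.\ on $(\gamma',\delta')$ and the singular part to have no mass there; combined with the absence of point masses this gives $\sigma((\gamma',\delta'))=0$. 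Exhausting $(\gamma,\delta)$ by such subintervals gives $\sigma((\gamma,\delta))=0$. The main obstacle lies precisely in this final step: a direct Stieltjes-inversion argument combined with dominated convergence fails because the Poisson integral of a finite measure need not admit a $y$-uniform $L^1$-dominating function on $[t_1,t_2]$, and the classical boundary-behavior theorems supply the required bridge.
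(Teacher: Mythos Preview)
Your argument is correct but follows a different route from the paper's. The paper works pointwise: from \eqref{ilm} the vanishing of the Poisson-type integral at a fixed $x$ forces, by \cite[Theorem~IV.II]{donoghue}, the existence of $\sigma'(x)$, and then \cite[Theorem~IV.I]{donoghue} identifies that derivative with the limit, giving $\sigma'(x)=0$. Since this holds at \emph{every} $x\in(\gamma,\delta)$, the monotone function $\sigma$ is differentiable everywhere on the interval with zero derivative and is therefore constant; no localization and no Lebesgue decomposition are needed.

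Your approach instead reduces to a finite measure via the cutoff $\sigma_1$, and then invokes two global boundary-value facts for Poisson integrals of finite measures (the Fatou-type identification of the a.e.\ limit with the absolutely continuous density, and the blow-up of the Poisson integral $\sigma_{1,s}$-almost everywhere). This is equally valid and is closer to the way these results are packaged in modern harmonic-analysis texts; it also makes transparent why the singular part cannot survive. The price is the extra splitting $M=M_1+M_2$ and the exhaustion at the end, whereas the paper's argument is a two-line appeal to Donoghue. Note also that your preliminary elimination of point masses is redundant, since atoms are part of $\sigma_{1,s}$ and are already killed by the ``Poisson integral $\to+\infty$ on a set of full $\sigma_{1,s}$-measure'' step.
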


\begin{proof}
Let the function $M(z)$ be of the form \eqref{nev'} and let $x \in (\gamma,\delta)$. It follows from
\eqref{nev'} that
\begin{equation}\label{ilm}
 \IM M(x+iy)=by+\int_\dR \frac{y}{(s-x)^2+y^2}\,d\sigma(s).
\end{equation}
It is known that if the limit of the integral in \eqref{ilm} is $0$ as $y \downarrow 0$, then $\sigma$ is
differentiable at $x$; see \cite[Theorem IV.II]{donoghue}. An application of \cite[Theorem IV.I]{donoghue}
shows that $\sigma'(x)=0$.

Hence, by assumption, it follows that $\sigma$ is differentiable on $(\gamma,\delta)$ and that $\sigma'(x)=0$
for all $x \in (\gamma,\delta)$. Therefore $\sigma$ is constant on $(\gamma,\delta)$ and, hence,  $M(z)$ is
holomorphic on $(\gamma,\delta)$.
\end{proof}

\subsection{Generalized Nevanlinna functions with one negative square}

Assume that $Q(z) \in \mathbf{N}_1$. A point $z_0\in\dC^+\cup\dR\cup\{\infty\}$ is a \textit{generalized pole
of nonpositive type} (GPNT) of $Q(z)$ if $z_0$ is a GZNT for the function $-1/Q(z)$ (which automatically
belongs to $\mathbf{N}_1$). A  function in $\mathbf{N}_1$ has precisely one GPNT in
$\dC^+\cup\dR\cup\{\infty\}$, just as it has precisely one GZNT in $\dC^+\cup\dR\cup\{\infty\}$. For the
following result, see \cite{DHS1,DHS3,DLLSh}.

\begin{theorem}\label{factor}
Any function $Q(z) \in \mathbf{N}_1$ admits the following factorization
\begin{equation}\label{fack}
 Q(z)=R(z) M(z),
\end{equation}
where $M(z) \in \mathbf{N}_0$ and $R(z)$ is a rational function of the form
\begin{equation}\label{einz}
 \frac{(z-\alpha)(z-\overline{\alpha})}{(z-\beta)(z-\overline{\beta })},
 \quad
 \frac{ 1}{(z-\beta)(z-\bar{\beta)}}, \quad
  \mbox{or}
  \quad
 (z-\alpha)(z-\bar{\alpha}).
\end{equation}
Here $\alpha, \beta \in \dC^+ \cup \dR \cup \{\infty\}$ stand for the GZNT and GPNT of $Q(z)$, respectively;
in the first case $\alpha$ and $\beta$ are finite, in the second case $\infty$ is a GZNT and $\beta$ is
finite, and  in the third case $\alpha$ is finite and $\infty$ is a GPNT.
\end{theorem}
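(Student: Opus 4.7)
The plan is to construct the factorization explicitly by defining $M(z):=Q(z)/R(z)$ with $R$ as prescribed in \eqref{einz}, and to verify that $M\in\mathbf{N}_0$. The three cases in \eqref{einz} run in parallel; I would focus on the first case, where $\alpha$ and $\beta$ are both finite, since the remaining two are obtained either by letting $\alpha$ or $\beta$ tend to infinity in the formulas, or equivalently by precomposing with a M\"{o}bius transformation of the variable that preserves $\mathbf{N}_1$.

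First I would dispose of the easy properties of $M$. The symmetry $M(\bar z)=\overline{M(z)}$ is immediate from the corresponding symmetries of $Q$ and of the real rational function $R$. For holomorphy on $\cmr$, the only potential issue lies at $\beta,\bar\beta$: when $\beta\in\dC^+$, the definition of a GPNT means that $-1/Q$ has a GZNT at $\beta$ in the open upper half-plane, so $Q$ has an ordinary pole there, which is cancelled by the factor $(z-\beta)(z-\bar\beta)$ appearing in the numerator of $1/R$; when $\beta$ is real, the point is outside $\cmr$ and nothing has to be checked.

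The heart of the argument is to show that the difference kernel of $M$ from \eqref{1.0} has no negative squares. Writing $K_Q$ and $K_R$ for the kernels associated with $Q$ and $R$, a short algebraic computation yields
\begin{equation*}
\frac{M(z)-\overline{M(w)}}{z-\bar w}\;=\;\frac{1}{R(z)\,\overline{R(w)}}\,\Bigl[R(z)\,K_Q(z,w)\;-\;Q(z)\,K_R(z,w)\Bigr].
\end{equation*}
The scalar prefactor $1/(R(z)\overline{R(w)})$ is a gauge that leaves the signature invariant, so the task reduces to verifying that the bracketed kernel is positive semidefinite. The unique negative direction of $K_Q$ is encoded precisely in the GZNT $\alpha$ and the GPNT $\beta$, and the special form of $R$ in \eqref{einz} is tailor-made so that the subtraction $-Q(z)\,K_R(z,w)$ absorbs exactly this one negative direction.

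The main obstacle is this signature count. The cleanest route, taken in the cited references \cite{DHS1,DHS3,DLLSh}, is operator-theoretic: by the Kre\u{\i}n--Langer realization theorem $Q$ is the Weyl function of a symmetric operator or relation with defect indices $(1,1)$ in a Pontryagin space of index one, whose unique one-dimensional negative subspace is determined by $\alpha$ and $\beta$. Multiplication by the rational factor $1/R(z)$ intertwines this model with a selfadjoint realization in a genuine Hilbert space, from which $M\in\mathbf{N}_0$ follows at once. The uniqueness of the GZNT and GPNT, already recorded in the discussion preceding Theorem \ref{factor}, ensures that $R$ is canonically determined by $Q$ and that the three cases of \eqref{einz} are mutually exclusive and exhaustive.
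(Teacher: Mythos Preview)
The paper does not prove Theorem~\ref{factor}; it is stated as a known result with the pointer ``For the following result, see \cite{DHS1,DHS3,DLLSh}.'' There is therefore no proof in the paper to compare your argument against.

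Your outline is consistent with the approach in those references: set $M=Q/R$ and verify $M\in\mathbf{N}_0$, ultimately via the Kre\u{\i}n--Langer realization. As a standalone argument it is honest but incomplete exactly where you say it is---the signature count for the bracketed kernel is the entire content of the theorem, and you hand it off to the same sources the paper cites. One small addition to your holomorphy check: besides cancelling the poles of $Q$ at $\beta,\bar\beta$, you should also note that when $\alpha\in\dC^+$ the division by $(z-\alpha)(z-\bar\alpha)$ does not create new poles, because the unique GZNT in $\dC^+$ is necessarily a \emph{simple} zero of $Q$ (a zero of order $\ge 2$ would force at least two negative squares of the kernel \eqref{1.0}). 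Do not invoke Corollary~\ref{factor+} for this, since its proof in the paper already uses Theorem~\ref{factor}.
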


For the function  $Q(z) \in \mathbf{N}_1$ the function $M(z) \in \mathbf{N}_0$ and the factors in
\eqref{einz} are uniquely determined. Note that $\al\ne\be$, otherwise $Q(z)$ would not have any negative
squares.

\begin{corollary}\label{factor+}
Let $Q(z) \in \mathbf{N}_1$ and let $z_0 \in \dC^+$. If $Q(z_0)=0$, then $Q'(z_0) \ne 0$.
\end{corollary}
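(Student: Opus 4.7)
The plan is to deduce the corollary from the factorization in Theorem \ref{factor} together with the standard fact that a Nevanlinna function in $\mathbf{N}_0$ has no zeros in $\dC^+$ (unless it vanishes identically, which is excluded for functions in $\mathbf{N}_1$).

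First, I would apply Theorem \ref{factor} to write $Q(z)=R(z)M(z)$ with $M(z)\in\mathbf{N}_0$ and $R(z)$ one of the three rational factors in \eqref{einz}. Since $z_0\in\dC^+$ is a zero of $Q(z)$, the definition of GZNT shows that $z_0$ must be the (unique) GZNT $\alpha$ of $Q(z)$. In particular $\alpha\in\dC^+$ is finite, so we are in the first or third case of \eqref{einz}, where $R(z)$ contains the factor $(z-\alpha)(z-\overline{\alpha})$ in the numerator; the denominator, if present, is $(z-\beta)(z-\overline{\beta})$ with $\beta\in\dC^+\cup\dR\cup\{\infty\}$ and $\beta\ne\alpha$.

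Next I would compute $R'(z_0)$. Since $\alpha=z_0\in\dC^+$, one has $z_0-\overline{z_0}=2\ii\,\IM z_0\ne 0$. Moreover, $\alpha\ne\beta$ by Theorem \ref{factor}, and $\alpha\ne\overline{\beta}$ as well (if $\beta\in\dC^+$ then $\overline{\beta}\in\dC^-$; if $\beta\in\dR$ then $\overline{\beta}=\beta\ne\alpha$). Consequently $R(z)$ has a simple zero at $z_0$ with $R'(z_0)\ne 0$. Differentiating the product gives
\begin{equation*}
Q'(z_0)=R'(z_0)M(z_0)+R(z_0)M'(z_0)=R'(z_0)M(z_0),
\end{equation*}
so it remains to show that $M(z_0)\ne 0$.

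The main (but standard) step is to exclude $M(z_0)=0$. Since $M\in\mathbf{N}_0$, the function $\IM M$ is nonnegative and harmonic on $\dC^+$. By the minimum principle, either $\IM M\equiv 0$ on $\dC^+$ (in which case $M$ is a real constant by the open mapping theorem), or $\IM M(z)>0$ for every $z\in\dC^+$; in the latter case $M$ has no zeros in $\dC^+$. In the former case $M\equiv c\in\dR$, and $c=0$ would give $Q\equiv 0$, contradicting $Q\in\mathbf{N}_1$; thus $c\ne 0$ and again $M(z_0)\ne 0$. Combining this with $R'(z_0)\ne 0$ yields $Q'(z_0)\ne 0$, proving the corollary.
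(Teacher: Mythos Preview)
Your proof is correct and follows essentially the same route as the paper: both use the factorization $Q(z)=R(z)M(z)$ from Theorem~\ref{factor}, observe that $z_0=\alpha$ is a simple zero of the rational factor, and conclude $Q'(z_0)=R'(z_0)M(z_0)\ne 0$. You in fact supply more detail than the paper, which simply asserts $M(z_0)\ne 0$ without argument, whereas you justify it via the minimum principle for the harmonic function $\IM M$.
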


\begin{proof}
Since $Q(z_0)=0$, it follows that $z_0 \in \dC^+$ is a GZNT of $Q(z)$. Therefore, according to Theorem
\ref{factor}, $Q(z)=(z-z_0)(z-\bar{z}_0) H(z)$ where $H(z)$ is holomorphic in a neighborhood of $z_0$ and
$H(z_0) \neq 0$ (since $M(z_0) \neq 0$). Differentiation of the identity  leads to
\[
 Q'(z)=(z-z_0)H(z)+(z-\bar z_0)H(z)+(z-z_0)(z-\bar z_0)H'(z),
\]
which implies that
\[
Q'(z_0)=2 \text{i} \, (\IM z_0)H(z_0).
\]
Since $z_0 \in \dC^+$ and $H(z_0)\neq 0$,  this implies that $Q'(z_0) \ne 0$.
\end{proof}

Let $Q(z) \in \mathbf{N}_1$ and assume that $Q(z)$ is holomorphic in a neighborhood of $z_0 \in \dR$.  The
following is a classification of $Q(z_0)=0$;  see \cite{DHS3}. A proof is included for completeness.

\begin{proposition}\label{zeros>0}
Let $Q(z) \in \mathbf{N}_1$  and assume that $Q(z)$ is holomorphic in a neighborhood of $z_0 \in \dR$.  If
$Q(z_0)=0$, then  precisely one of the following possibilities occurs:
 \begin{itemize}
 \item[(0)]    $Q'(z_0) > 0$;
 \item[(1)]    $Q'(z_0) < 0$;
 \item[(2a)]  $Q'(z_0) = 0$ and $Q''(z_0) > 0$;
 \item[(2b)]  $Q'(z_0) = 0$ and $Q''(z_0) < 0$;
 \item[(3)]    $Q'(z_0) = 0$ and $Q''(z_0) = 0$ (in which case $Q'''(z_0) > 0$).
 \end{itemize}
In the cases {\rm(1)--(3)} the point $z_0 \in \dR$ is necessarily a GZNT of the function $Q(z)$.
\end{proposition}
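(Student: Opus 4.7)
My plan is to invoke the factorization $Q(z) = R(z) M(z)$ from Theorem \ref{factor}, distinguishing cases according to whether the GZNT $\alpha$ of $Q$ coincides with $z_0$, and, when it does, whether $M$ is holomorphic at $z_0$ or has a simple pole there. Throughout, the holomorphy of $Q$ at $z_0 \in \dR$ immediately excludes $\beta = z_0$, so in all three forms of \eqref{einz} the denominator is non-vanishing at $z_0$.

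If $\alpha \neq z_0$, then examining the three forms in \eqref{einz} shows $R(z_0) > 0$; from $Q(z_0) = 0$ one gets $M(z_0) = 0$, and $M$ must be holomorphic at $z_0$, for otherwise $Q$ would fail to be. Formula \eqref{gapp} gives $M'(z_0) \geq 0$, and equality would force $b = 0$ together with $\sigma$ constant on $\dR$, hence $M \equiv 0$ by $M(z_0) = 0$, contradicting $Q \in \mathbf{N}_1$. Thus $M'(z_0) > 0$ and $Q'(z_0) = R(z_0) M'(z_0) > 0$: this is case (0), and $z_0$ is not a GZNT.

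If $\alpha = z_0$, then $R(z) = (z-z_0)^2 g(z)$ with $g$ holomorphic and $g(z_0) > 0$. When $M$ is holomorphic at $z_0$, differentiating $Q(z) = (z-z_0)^2 g(z) M(z)$ yields $Q'(z_0) = 0$ and $Q''(z_0) = 2 g(z_0) M(z_0)$, giving (2a) or (2b) according to the sign of $M(z_0)$; in the borderline case $M(z_0) = 0$, the same Nevanlinna argument used above forces $M'(z_0) > 0$, whence $Q'''(z_0) = 6 g(z_0) M'(z_0) > 0$, which is case (3). When instead $M$ has a pole at $z_0$, it is simple with negative residue, $M(z) = -c/(z-z_0) + \widetilde M(z)$ for some $c > 0$ (obtained from $\sigma(\{z_0\}) = c$ via \eqref{nev++}); substitution produces $Q(z) = -c(z-z_0) g(z) + (z-z_0)^2 g(z) \widetilde M(z)$, so $Q'(z_0) = -c g(z_0) < 0$, which is case (1). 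In every sub-case of $\alpha = z_0$, the point $z_0 = \alpha$ is by definition the GZNT, covering (1)--(3); the five cases are mutually exclusive by their signatures in $(Q'(z_0), Q''(z_0), Q'''(z_0))$.

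The one non-routine point I expect is noticing that a pole of $M$ at $z_0$ is fully compatible with $Q$ being holomorphic there, the residue being absorbed by the double zero of $R$. This is precisely the mechanism producing case (1); without spotting it one would wrongly conclude that $\alpha = z_0$ always forces $Q'(z_0) = 0$, and case (1) would appear vacuous.
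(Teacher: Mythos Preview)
Your proof is correct and follows essentially the same route as the paper's: both invoke the factorization $Q = RM$ from Theorem~\ref{factor}, split according to whether $z_0$ coincides with the GZNT $\alpha$, and in the case $\alpha = z_0$ expand around $z_0$ to read off the signs of the first three derivatives. Your separation into ``$M$ holomorphic at $z_0$'' versus ``$M$ has a simple pole at $z_0$'' corresponds exactly to the paper's split $m_{-1} = 0$ versus $m_{-1} < 0$ in the Laurent expansion of $M$, and your function $g$ plays the role of the paper's coefficient sequence $c_0, c_1, \dots$ for the reciprocal of $(z-\beta)(z-\bar\beta)$.
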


\begin{proof}
Since $Q(z) \in \mathbf{N}_1$ it is of the form $Q(z)=R(z)M(z)$, where $R(z)$ is of the form  \eqref{einz}
and $M(z)$ is a Nevanlinna function; see Theorem \ref{factor}. Assume that  $Q(z)$ is holomorphic in a
neighborhood of $z_0 \in \dR$ and that $Q(z_0)=0$.

\textit{Case 1}. Consider the case $R(z_0) \neq 0$. Then $M(z)$ must be holomorphic around $z_0 \in \dR$ and
$M(z_0)=0$. Observe that $M'(z_0)>0$, otherwise $M(z)$ would be constant, see relation \eqref{gapp}, and
hence identical zero, which would imply $Q(z)\equiv 0\notin \mathbf{N}_1$.
 It follows from $Q'(z)=R'(z)M(z)+R(z)M'(z)$ that $Q'(z_0)=R(z_0) M'(z_0)$. Observe from
\eqref{einz} that $R(z_0) >0$. Therefore $Q'(z_0)>0$, so that (0) occurs.

\textit{Case 2}. It remains to consider the case $R(z_0) = 0$. This implies that $z_0=\alpha \in \dR$. Hence
\[
 Q(z)=(z-\alpha)^2M(z) \quad \mbox{or} \quad Q(z)=\frac{(z-\alpha)^2}{(z-\beta)(z-\bar{\beta})}\,M(z).
\]
Since $Q(z)$ is holomorphic around $\alpha \in \dR$, it follows that the Nevanlinna function $M(z)$ has the
expansion
\begin{equation}\label{nevv}
M(z)=\frac{m_{-1}}{z-\alpha}+m_0+m_1 (z-\alpha)+ \cdots
\end{equation}
where $m_{-1} \le 0$ and $m_i \in \dR$, $i \in \dN$. Moreover
\[
\begin{split}
 Q(z)&=c_0m_{-1} (z-\alpha)+(c_0m_0+c_1m_{-1})(z-\alpha)^2 \\
 &\hspace{2cm} +(c_0m_1+c_1m_0+c_2m_{-1})(z-\alpha)^3+\cdots,
\end{split}
\]
where $c_0>0$ and $c_i \in \dR$, $i \in \dN$, stand for the coefficients of the power series expansion of the
function $[(z-\beta)(z-\bar{\beta})]^{-1}$ or of the function $1$ if $\beta=\infty$. In particular, the
following identities are clear:
\begin{equation}\label{id1}
 Q'(\alpha)=c_0m_{-1},
\end{equation}
\begin{equation}\label{id2}
2Q''(\alpha)=c_0m_0+c_1m_{-1},
\end{equation}
and
\begin{equation}\label{id3}
 6Q'''(\alpha)=c_0m_1+c_1m_0+c_2m_{-1}.
\end{equation}
It follows from \eqref{id1} that $Q'(\alpha) \le 0$. There is the following subdivision:
\begin{itemize}
\item $Q'(\alpha) < 0$. Then (1) occurs.
\item $Q'(\alpha)=0$ and $Q''(\alpha)>0$ or $Q''(\alpha)<0$. Then (2a) or (2b) occur.
\item $Q'(\alpha)=0$ and $Q''(\alpha)=0$. In this case it follows from \eqref{id1} and \eqref{id2}
that $m_{-1}=0$ and $m_0=0$. Note that \eqref{nevv} with $m_{-1}=m_0=0$ implies that $m_1>0$. According to
\eqref{id3}, it follows that $Q'''(\alpha)=c_0m_1$.
Therefore $Q'''(\alpha) > 0$, so that (3) occurs.
\end{itemize}

To conclude observe that it follows from \eqref{x_0} that $Q(z_0)=0$ and $Q'(z_0) \le 0$  imply that $z_0 \in
\dR$ is a GZNT of $Q(z)$.
\end{proof}

\subsection{The inverse function theorem}

The following consequence of the usual inverse function theorem will be useful. It specifies branches of
solutions of an equation involving holomorphic functions; cf. \cite[Theorem 9.4.3]{H}.

\begin{theorem}\label{inver'}
Let $Q(z)$ be a function which is holomorphic at $z_0$ and assume that  $Q^{(i)}(z_0)=0$, $0 \le i \le n-1$,
and $Q^{(n)}(z_0)\ne 0$ for some $n \ge 1$ so that
\[
 Q(z)=\frac{Q^{(n)}(z_0)}{n!}(z-z_0)^n+\frac{Q^{(n+1)}(z_0)}{(n+1)!}(z-z_0)^{n+1} + \cdots.
\]
Then there is a neighborhood of $z_0$ where the equation
\begin{equation}\label{inv'}
Q(z)=w^n
\end{equation}
has $n$  solutions $z=\phi^{i}(w)$, $1\le i\le n$. The functions $\phi^{i}(w)$ are holomorphic at $0$, of the
form
\[
 \phi^{i}(w)=z_0+\phi^{i}_1 w+ \phi^{i}_2 w^2+\cdots,
\]
and their first order coefficients $\phi^{i}_1$, $1\le i\le n$, are the $n$ distinct roots of the equation
\begin{equation}
\label{crit'} (\phi_1^i)^n= \frac{n!}{Q^{(n)}(z_0)},\quad 1\le i\le n.
\end{equation}
\end{theorem}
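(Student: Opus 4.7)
The plan is to reduce the $n$-th order equation \eqref{inv'} to an ordinary application of the inverse function theorem, by factoring out $(z-z_0)^n$ and extracting an $n$-th root of the remaining holomorphic factor.

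First, using the hypothesis that $Q^{(i)}(z_0)=0$ for $0 \le i \le n-1$, I would write
\[
 Q(z)=(z-z_0)^n g(z), \quad g(z)=\frac{Q^{(n)}(z_0)}{n!}+\frac{Q^{(n+1)}(z_0)}{(n+1)!}(z-z_0)+\cdots,
\]
where $g$ is holomorphic at $z_0$ and $g(z_0)=Q^{(n)}(z_0)/n!\ne 0$. Since $g$ is nonvanishing at $z_0$, there is a disk around $z_0$ on which the principal branch of the logarithm of $g$ is defined; fix any holomorphic function $h$ on this disk with $h(z)^n=g(z)$. The $n$ possible choices of $h$ differ by a factor from the group of $n$-th roots of unity. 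With this $h$, the equation $Q(z)=w^n$ becomes
\[
 \bigl((z-z_0) h(z)\bigr)^n=w^n.
\]

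Next I would set $F(z)=(z-z_0)h(z)$, so that $F$ is holomorphic at $z_0$ with $F(z_0)=0$ and $F'(z_0)=h(z_0)\ne 0$. By the standard inverse function theorem, $F$ has a holomorphic local inverse $G$ near $0$ with $G(0)=z_0$. Letting $\zeta_1,\dots,\zeta_n$ denote the $n$ distinct $n$-th roots of unity, the equation above is equivalent to $F(z)=\zeta_i w$ for some $i$, which yields precisely the $n$ holomorphic branches
\[
 \phi^i(w)=G(\zeta_i w), \quad 1\le i\le n,
\]
defined in a neighborhood of $0$. That these are all the solutions in a suitable neighborhood follows from the fact that $z\mapsto F(z)^n=Q(z)$ is an $n$-to-$1$ branched covering over a punctured disk around $z_0$, so no further solutions exist.

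Finally I would compute the first-order coefficients. Expanding $G(\zeta_i w)=z_0+G'(0)\zeta_i w+O(w^2)$ gives
\[
 \phi_1^i=G'(0)\,\zeta_i=\frac{\zeta_i}{F'(z_0)}=\frac{\zeta_i}{h(z_0)}.
\]
Raising to the $n$-th power and using $h(z_0)^n=g(z_0)=Q^{(n)}(z_0)/n!$ and $\zeta_i^n=1$ gives
\[
 (\phi_1^i)^n=\frac{1}{h(z_0)^n}=\frac{n!}{Q^{(n)}(z_0)},
\]
which is \eqref{crit'}. Since the $\zeta_i/h(z_0)$ are distinct, so are the $\phi_1^i$, and they exhaust the $n$ roots of \eqref{crit'}.

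The one genuinely delicate point is the existence of the holomorphic $n$-th root $h$ of $g$, but this is standard once one observes that $g(z_0)\ne 0$: any simply connected neighborhood of $z_0$ avoiding the zeros of $g$ admits a holomorphic logarithm of $g$, hence a holomorphic $n$-th root. Everything else is a direct computation or an appeal to the classical inverse function theorem.
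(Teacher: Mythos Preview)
Your argument is correct and is essentially the standard textbook proof of this result. Note that the paper does not actually supply its own proof of this theorem: it is stated as a known consequence of the inverse function theorem with a reference to Hille, \textit{Analytic function theory}, Theorem~9.4.3, so there is nothing to compare against beyond the classical argument you have reproduced.
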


\section{Elementary properties of $\alpha(\tau)$ }\label{basicalpha}

Let $Q(z) \in \mathbf{N}_1$. The fractional linear transform $Q_\tau(z)$, $\tau \in \dR \cup \{\infty\}$, is
defined  in \eqref{bil}, so that the derivative  of $Q_\tau$ is given by
\begin{equation}\label{QQ'}
 Q'_\tau(z)=(1+\tau^2)\frac{Q'(z)}{(1+\tau Q(z))^2}, \quad \tau \in \dR, \quad \mbox{and}
 \quad Q'_\infty(z)=\frac{1}{Q(z)^2}.
\end{equation}
Since $Q_\tau(z)$  also belongs to $\mathbf{N}_1$ for $\tau\in\dR\cup\{\infty\}$, Theorem~\ref{factor} may be
applied.

\begin{corollary}
Let $Q(z) \in \mathbf{N}_1$. Then $Q_\tau(z)$, $\tau\in\dR\cup\{\infty\}$, has the factorization
\begin{equation}\label{QQ+}
 Q_\tau(z)=R_{(\tau)}(z) M_{(\tau)}(z),
\end{equation}
where $M_{(\tau)}(z) \in \mathbf{N}_0$ and $R_{(\tau)}(z)$ is a rational function of the form
\begin{equation}\label{einzt}
 \frac{(z-\alpha(\tau))(z-\overline{\alpha(\tau)})}{(z-\beta(\tau))(z-\overline{\beta(\tau}))},
 \quad
 \frac{ 1}{(z-\beta(\tau))(z-\overline{\beta(\tau)})},
 \quad
 \mbox{or}
 \quad
  (z-\alpha(\tau))(z-\overline{\alpha(\tau)}).
\end{equation}
Here $\alpha(\tau), \beta(\tau) \in \dC^+ \cup \dR \cup \{\infty\}$ stand for the GZNT and GPNT of
$Q_\tau(z)$, respectively; in the first case $\alpha(\tau)$ and $\beta(\tau)$ are finite, in the second case
$\infty$ is a GZNT and $\beta(\tau)$ is finite, and in the third case $\alpha(\tau)$ is finite and $\infty$
is a GPNT.
\end{corollary}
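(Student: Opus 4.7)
The plan is to view the statement as a direct application of Theorem~\ref{factor} to the function $Q_\tau(z)$ in place of $Q(z)$, once we know that $Q_\tau(z) \in \mathbf{N}_1$.

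First I would verify the hypothesis: for each $\tau \in \dR \cup \{\infty\}$, the transform $Q_\tau(z)$ defined by \eqref{bil} is again a generalized Nevanlinna function with exactly one negative square. This is the assertion already made in the introduction just after \eqref{bil}. To make it explicit, I would compute the reproducing kernel for $Q_\tau$. For finite $\tau$, a short calculation yields
\begin{equation*}
\frac{Q_\tau(z) - \overline{Q_\tau(w)}}{z - \bar w}
= \frac{1+\tau^2}{(1+\tau Q(z))(1+\tau\overline{Q(w)})}\cdot \frac{Q(z) - \overline{Q(w)}}{z - \bar w},
\end{equation*}
while for $\tau = \infty$ one gets the analogous identity with $1/(Q(z)\overline{Q(w)})$ in place of the first factor. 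Since the prefactor is of the form $\varphi(z)\overline{\varphi(w)}$ with $\varphi$ meromorphic on $\cmr$, multiplication by it preserves the number of negative squares of the kernel; hence $Q_\tau \in \mathbf{N}_1$.

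Next, having $Q_\tau \in \mathbf{N}_1$, Theorem~\ref{factor} applies and yields a factorization $Q_\tau(z) = R_{(\tau)}(z) M_{(\tau)}(z)$ with $M_{(\tau)} \in \mathbf{N}_0$ and $R_{(\tau)}$ rational of one of the three forms listed in \eqref{einz}. By the very statement of Theorem~\ref{factor}, the points appearing in the numerator and denominator of $R_{(\tau)}$ are the GZNT and the GPNT of $Q_\tau$, respectively; we label these $\alpha(\tau)$ and $\beta(\tau)$. The three cases in \eqref{einzt} correspond exactly, and in the same order, to the three cases in \eqref{einz}: both $\alpha(\tau),\beta(\tau)$ finite; $\alpha(\tau) = \infty$ and $\beta(\tau)$ finite; $\alpha(\tau)$ finite and $\beta(\tau) = \infty$. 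The uniqueness of $\alpha(\tau)$ and $\beta(\tau)$ is inherited from the uniqueness assertion (stated just after Theorem~\ref{factor}) of the GZNT and GPNT of a function in $\mathbf{N}_1$.

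There is essentially no obstacle: the only substantive content is the invariance of $\mathbf{N}_1$ under \eqref{bil}, which reduces to the kernel identity above, and after that the corollary is a mere rewriting of Theorem~\ref{factor} with the parameter $\tau$ attached.
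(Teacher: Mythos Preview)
Your proposal is correct and follows exactly the paper's approach: the paper simply notes (just before the corollary) that ``Since $Q_\tau(z)$ also belongs to $\mathbf{N}_1$ for $\tau\in\dR\cup\{\infty\}$, Theorem~\ref{factor} may be applied,'' and the introduction already records that the invariance of $\mathbf{N}_1$ under \eqref{bil} follows from the kernel identity you wrote down. You have merely made explicit the one-line computation the paper alludes to; nothing more is needed.
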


Note that $\alpha(0)=\alpha$ and $\beta(0)=\beta$, and that $\alpha(\tau)\ne\be(\tau)$ for all
$\tau\in\dR\cup\{\infty\}$. The paths $\alpha(\tau)$ and $\beta(\tau)$ are related. To see this, observe that
it follows from the form of the fractional linear transform \eqref{bil} that
\[
 Q_{-1/\tau}(z)=\frac{Q(z)+1/\tau}{1-Q(z)/\tau}
 =-Q_\tau(z)^{-1}, \quad \tau \in \dR \cup \{\infty\}.
\]
Therefore, \eqref{QQ+} and \eqref{einzt} lead to
\begin{equation}\label{alphbet}
 \alpha(\tau)=\beta({-1/\tau}), \quad \beta(\tau)=\alpha(-1/\tau),\quad\tau\in\dR\setminus\{0\},
\end{equation}
and, in particular, to $\alpha(\infty)=\beta$ and $\beta(\infty)=\alpha$.

According to the identities in \eqref{alphbet} it suffices to describe the function $\alpha(\tau)$. The
\textit{path} of the GZNT of the function $Q(z) \in \mathbf{N}_1$ is defined by
\[
\mathcal{F}_Q:=\{\,\alpha(\tau):\tau\in\dR\cup\{\infty\}\,\}.
\]
The following result concerning the parametrization of the path may be useful.

\begin{lemma}
Let $Q(z) \in \mathbf{N}_1$ and let $\tau_0 \in \dR \cup \{\infty\}$. Then the path of the GZNT of $Q(z)$
coincides with the path of the GZNT of $Q_{\tau_0}(z)$, i.e.
 \begin{equation}\label{FQt}
\mathcal{F}_Q=\mathcal{F}_{Q_{\tau_0}},\quad \tau_0\in\dR\cup\{\infty\}.
\end{equation}
\end{lemma}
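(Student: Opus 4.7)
The natural plan is to check that the family of fractional linear transforms $Q\mapsto Q_\tau$ carries a group structure: the composition of two such transforms is again a transform of the same type. Concretely, a direct calculation substituting $Q_{\tau_0}(z)=(Q(z)-\tau_0)/(1+\tau_0 Q(z))$ into the definition of $(Q_{\tau_0})_\tau$ yields
\[
(Q_{\tau_0})_\tau(z)=\frac{(1-\tau\tau_0)Q(z)-(\tau+\tau_0)}{(1-\tau\tau_0)+(\tau+\tau_0)Q(z)}=Q_{\tau'}(z),
\qquad \tau'=\frac{\tau+\tau_0}{1-\tau\tau_0},
\]
with the usual conventions when $1-\tau\tau_0=0$ or when $\tau_0=\infty$ (the latter corresponds to the transform $Q_\infty(z)=-1/Q(z)$, and the same substitution gives $(Q_\infty)_\tau=Q_{-1/\tau}$).

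I would then observe that the map $\tau\mapsto \tau'=(\tau+\tau_0)/(1-\tau\tau_0)$ is a bijection of the one-point compactification $\dR\cup\{\infty\}$ onto itself (it is a Möbius transformation of the real projective line, parametrizing rotations; setting $\tau=\tan\theta$ and $\tau_0=\tan\theta_0$ it is just the tangent addition formula). Hence as $\tau$ runs over $\dR\cup\{\infty\}$ so does $\tau'$.

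Combining these two observations, the function $(Q_{\tau_0})_\tau(z)$ is, for each $\tau\in\dR\cup\{\infty\}$, equal to $Q_{\tau'}(z)$ for a uniquely determined $\tau'\in\dR\cup\{\infty\}$; uniqueness of the GZNT from Theorem~\ref{factor} (applied to the $\mathbf N_1$-function $Q_{\tau'}$) therefore gives that the GZNT of $(Q_{\tau_0})_\tau$ is $\alpha(\tau')$. Consequently
\[
\mathcal F_{Q_{\tau_0}}=\{\alpha(\tau'):\tau\in\dR\cup\{\infty\}\}=\{\alpha(\tau'):\tau'\in\dR\cup\{\infty\}\}=\mathcal F_Q,
\]
which is \eqref{FQt}.

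The only mildly delicate point is the bookkeeping at the singular values $1-\tau\tau_0=0$ and at $\tau_0=\infty$, where one has to verify the identity $(Q_{\tau_0})_\tau=Q_{\tau'}$ by continuity (or by separate direct calculation); beyond that, the argument is an elementary consequence of the group law for these fractional linear transformations.
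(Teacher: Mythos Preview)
Your proof is correct and follows essentially the same route as the paper: both compute the composition law $(Q_{\tau_0})_\tau=Q_{\tau'}$ with $\tau'=(\tau+\tau_0)/(1-\tau\tau_0)$ (together with the corresponding formulas at $\infty$), and then read off the equality of paths from the fact that $\tau\mapsto\tau'$ sweeps out all of $\dR\cup\{\infty\}$. Your explicit remark that this map is a bijection of $\dR\cup\{\infty\}$ makes the last step slightly more transparent than the paper's version, but the argument is the same.
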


\begin{proof}
First consider the case $\tau \in \dR$. Then a simple calculation shows that
\begin{equation}\label{tr}
 (Q_\tau)_\rho(z)= Q_{\frac{\tau+\rho}{1-\rho\tau}}(z),
 \quad \rho\in\dR,
 \quad (Q_\tau)_\infty(z)=Q_{-1/\tau}(z).
\end{equation}
Hence, if the GZNT of $(Q_\tau)_\rho(z)$ is denoted by $\alpha_\tau(\rho)$,  then it follows from \eqref{tr}
that
\begin{equation}\label{tr1}
\alpha_\tau(\rho)=\alpha\left(\frac{\tau+\rho}{1-\rho\tau}\right), \quad \rho\in\dR,\quad
\alpha_\tau(\infty)=\alpha_{-1/\tau}.
\end{equation}
The identity \eqref{tr1}  shows that  \eqref{FQt} is valid. The case $\tau=\infty$ can be treated similarly.
\end{proof}

The points of $\mathcal{F}_Q$, i.e. the solutions of the equation  $\alpha(\tau_0)=z_0$,   will be
characterized in terms of the function $Q(z)$. But first observe the following. If $Q(z)$ is holomorphic in a
neighborhood of $\alpha(\tau_0)$ and if $Q'(\alpha(\tau_0))\neq0$, then clearly the function $\alpha(\tau)$
is holomorphic in a neighborhood of $\tau_0$; this follows from the usual inverse function theorem (see
Theorem \ref{inver'} with $n=1$).

\begin{theorem}\label{charact}
Let $Q(z) \in \mathbf{N}_1$ and let $\tau_0 \in \dR$.
\begin{enumerate}\def\labelenumi {\rm (\roman{enumi})}
\item Let $z_0 \in \dC^+$. Then $\alpha(\tau_0)=z_0$ if and only if
\begin{equation}\label{un}
    Q(z_0)=\tau_0.
\end{equation}
In this case, the function $\alpha(\tau)$ is holomorphic in a neighborhood of $\tau_0$.

\item  Let $z_0\in\dR$. Then $\alpha(\tau_0)=z_0$ if and only if
\begin{equation}\label{deux}
\lim_{z \wh\to z_0}\frac{Q(z)-\tau_0}{z-z_0} \in (-\infty,0].
\end{equation}

\item  Let $z_0=\infty$. Then $\alpha(\tau_0)=\infty$ if and only if
\begin{equation}\label{deux+}
\lim_{z \wh\to \infty} z (Q(z)-\tau_0) \in [0,\infty).
\end{equation}

\end{enumerate}
\end{theorem}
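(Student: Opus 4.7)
The plan is to unpack each of the three conditions by applying the definition of GZNT from the introduction to the function $Q_{\tau_0}$ and then to use the transform formula \eqref{bil} and its derivative \eqref{QQ'} to translate the resulting conditions on $Q_{\tau_0}$ back to conditions on $Q$. In every case the crucial algebraic identity is
\[
 Q_{\tau_0}(z)=\frac{Q(z)-\tau_0}{1+\tau_0 Q(z)},\qquad
 Q_{\tau_0}'(z)=\frac{(1+\tau_0^2)\,Q'(z)}{(1+\tau_0 Q(z))^2},
\]
and the key observation that whenever the quantities in \eqref{un}--\eqref{deux+} are finite, the denominator $1+\tau_0 Q(z)$ tends to $1+\tau_0^2>0$, so the bilinear transform contributes only a harmless positive factor.

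For part (i), $\alpha(\tau_0)=z_0\in\dC^+$ means by definition that $Q_{\tau_0}(z_0)=0$. I would then observe that $Q_{\tau_0}(z_0)=0$ forces $z_0$ not to be a pole of $Q$ (since a pole would produce $Q_{\tau_0}(z_0)=-1/\tau_0$ or a pole when $\tau_0=0$) and forces $1+\tau_0 Q(z_0)\ne 0$ (otherwise $Q_{\tau_0}$ has a pole at $z_0$, not a zero), so the identity collapses to $Q(z_0)=\tau_0$; the converse is immediate since $1+\tau_0^2>0$. For the holomorphy assertion, I apply Corollary \ref{factor+} to $Q_{\tau_0}\in\mathbf{N}_1$ at its $\dC^+$--GZNT $z_0$ to conclude $Q_{\tau_0}'(z_0)\ne 0$; by the derivative formula above this gives $Q'(z_0)\ne 0$. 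Then the $n=1$ case of Theorem \ref{inver'} provides a holomorphic $\phi$ near $\tau_0$ with $Q(\phi(\tau))=\tau$ and $\phi(\tau_0)=z_0$. Since $\phi(\tau)\in\dC^+$ for $\tau$ close to $\tau_0$, the already-proved forward implication of (i) identifies $\phi(\tau)$ with the unique GZNT $\alpha(\tau)$.

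For parts (ii) and (iii), the definition of GZNT applied to $Q_{\tau_0}$ gives the conditions
\[
 \lim_{z\wh\to z_0}\frac{Q_{\tau_0}(z)}{z-z_0}\in(-\infty,0],\qquad
 \lim_{z\wh\to\infty} zQ_{\tau_0}(z)\in[0,\infty),
\]
respectively. Using
\[
 \frac{Q_{\tau_0}(z)}{z-z_0}=\frac{1}{1+\tau_0 Q(z)}\cdot\frac{Q(z)-\tau_0}{z-z_0},
 \qquad
 zQ_{\tau_0}(z)=\frac{1}{1+\tau_0 Q(z)}\cdot z(Q(z)-\tau_0),
\]
the forward direction (finite limit of the right-hand numerator implies $Q(z)\to\tau_0$ and the equivalence follows up to the factor $1+\tau_0^2$) is straightforward. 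For the converses I would argue that a finite nonpositive, respectively finite nonnegative, limit on the left forces $Q_{\tau_0}(z)\to 0$ non-tangentially; combined with the transform formula this rules out both $Q(z)\to-1/\tau_0$ (which would make $Q_{\tau_0}$ blow up) and $Q(z)\to\infty$ (which would yield $Q_{\tau_0}\to-1/\tau_0\ne0$ when $\tau_0\ne0$, or contradict $Q_{\tau_0}\to0$ when $\tau_0=0$), leaving $Q(z)\to\tau_0$ as the only possibility, after which the denominator stabilises to $1+\tau_0^2$ and the equivalence with \eqref{deux}, \eqref{deux+} follows.

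The main obstacle is this last converse step for (ii) and (iii): showing that from the non-tangential condition on $Q_{\tau_0}$ one can genuinely deduce $Q(z)\to\tau_0$ non-tangentially. Once this is in place the rest is routine manipulation of the bilinear transform; the forward implications, the holomorphy in (i), and the algebraic equivalences all reduce quickly to identities already recorded in the paper.
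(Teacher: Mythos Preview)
Your proposal is correct and follows essentially the same route as the paper: the same algebraic identity for $Q_{\tau_0}(z)/(z-z_0)$ (respectively $zQ_{\tau_0}(z)$), the same appeal to Corollary~\ref{factor+} and the derivative formula~\eqref{QQ'} for the holomorphy in (i), and the same reduction via the factor $1/(1+\tau_0 Q(z))\to 1/(1+\tau_0^2)$. The ``obstacle'' you flag in the converse of (ii)--(iii) dissolves immediately once you note that the bilinear transform inverts as $Q(z)=(Q_{\tau_0}(z)+\tau_0)/(1-\tau_0 Q_{\tau_0}(z))$, so $Q_{\tau_0}(z)\to 0$ gives $Q(z)\to\tau_0$ directly, without any case analysis.
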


\begin{proof}
(i)  Let $z_0 \in \dC^+$ and let $\tau_0 \in \dR$.

($\Leftarrow$) If \eqref{un} holds, then $Q_{\tau_0}(z_0)=0$. Hence, $z_0 \in \dC^+$ is a  zero of
$Q_{\tau_0}(z) \in \mathbf{N}_1$, which implies that $z_0$ is a GZNT of $Q_{\tau_0}(z)$. By uniqueness it
follows that $z_0=\alpha(\tau_0)$.

($\Rightarrow$) If $\alpha(\tau_0)=z_0$, then $Q_{\tau_0}(z_0)=Q_{\tau_0}(\alpha(\tau_0))=0$, so that
\eqref{un} holds.

If $z_0:=\alpha(\tau_0)\in\dC^+$, then $Q_{\tau_0}(z_0)=0$ and hence
 $Q'_{\tau_0}(z_0) \ne 0$; cf. Corollary \ref{factor+}.
It follows from \eqref{QQ'} that $Q'(z_0)\ne 0$. Therefore $\alpha(\tau)$ is holomorphic at $\tau_0$.

(ii) Let $z_0 \in \dR$ and let $\tau_0 \in \dR$.
It follows from the fractional linear transform \eqref{bil} that
\begin{equation}\label{deux--}
\frac{Q_{\tau_0}(z)}{z-z_0}=\frac{1}{1+\tau_0 Q(z)} \,\,\frac{Q(z)-\tau_0}{z-z_0}.
\end{equation}

($\Leftarrow$) Assume that \eqref{deux} holds.
Since \eqref{deux} implies $\lim_{z\wh\to z_0} Q_{\tau_0}(z)=0$, it follows from \eqref{deux--} that
\[
 \lim_{z \wh\to z_0}\frac{Q_{\tau_0}(z)}{z-z_0}
 =\frac1{1+\tau_0^2}\,\lim_{z \to z_0} \frac{Q(z)-\tau_0}{z-z_0} \in (-\infty,0].
\]
Hence, by \eqref{x_0} $z_0$ is the GZNT of $Q_{\tau_0}(z)$.

($\Rightarrow$)
Since $Q_{\tau_0}(z)/(z-z_0) \to 0$ as $z \wh\to z_0$, it follows that $Q_{\tau_0}(z) \to 0$ as $z \wh\to z_0$ and, therefore, $Q(z) \wh\to \tau_0$. Hence $\alpha(\tau_0)=z_0$ follows from \eqref{deux--}.

(iii) Let $z_0 = \infty$ and let $\tau_0 \in \dR$.
It follows from the fractional linear transform \eqref{bil} that
\begin{equation}
z Q_{\tau_0}(z) =\frac{1}{1+\tau_0 Q(z)} \,\,z( Q(z)-\tau_0).
\end{equation}
The proof now uses \eqref{x_00} with similar arguments  as in (ii).
\end{proof}

The case $\tau_0=\infty$ is not explicitly mentioned in Theorem \ref{charact}.
Recall that $\alpha(\infty)=\beta$. Hence, the identity $z_0=\alpha(\infty)$ means actually that $z_0$ is a generalized pole of nonpositive type of the function $Q(z)$.

\begin{corollary}\label{quh}
Let $Q(z) \in \mathbf{N}_1$, then
\[
\{\,z\in \dC^+ : \IM Q(z)=0\,\} \subseteq\mathcal{F}_Q.
\]
\end{corollary}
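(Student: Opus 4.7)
The statement is a direct corollary of Theorem \ref{charact}(i), so the proposed proof is essentially a one-line application. Suppose $z_0 \in \dC^+$ satisfies $\IM Q(z_0) = 0$. Since $Q(z)$ is meromorphic on $\cmr$, the condition $\IM Q(z_0)=0$ presupposes that $Q$ is holomorphic at $z_0$ and that the value $Q(z_0)$ is a real number. Set $\tau_0 := Q(z_0) \in \dR$. Then \eqref{un} holds at $z_0$, and by Theorem \ref{charact}(i) this implies $\alpha(\tau_0) = z_0$. Consequently $z_0 \in \mathcal{F}_Q$.

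The entire content of the proof is therefore just producing the real parameter $\tau_0 = Q(z_0)$ and invoking the characterization. There is no real obstacle: the only point worth remarking is that the inclusion is not an equality, because $\mathcal{F}_Q$ may also contain real points $\alpha(\tau_0) \in \dR$ and the point $\alpha(\infty) = \beta$, which need not be detected by the condition $\IM Q(z) = 0$ on $\dC^+$ (in particular, a nonreal pole of $Q$ lies in $\mathcal{F}_Q$ via $\alpha(\infty)$ but is excluded from the left-hand set as written).
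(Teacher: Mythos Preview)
Your proof is correct and follows essentially the same approach as the paper: both take $\tau_0 = Q(z_0) \in \dR$ (the paper writes it as $\RE Q(z_0)$, which is the same thing since $\IM Q(z_0)=0$) and invoke the characterization in Theorem~\ref{charact}(i). Your additional remark about why the inclusion is strict is a nice observation not present in the paper's proof.
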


\begin{proof}
Let $z_0 \in \dC^+$ and assume that $\IM Q(z_0)=0$. Then
\[
 Q(z_0)-\tau_0= \RE Q(z_0)-\tau_0,
\]
so that the lefthand side equals zero, if $\tau_0$ is defined as $\RE Q(z_0)$. In this case $Q_{\tau_0}(z)$
has a zero at $z_0$.
\end{proof}

\begin{corollary}\label{alphainjective}
Let $Q(z) \in \mathbf{N}_1$, then the function
\[
\dR\cup\{\infty\}\ni\tau\mapsto \alpha(\tau)\in\dC^+\cup\dR\cup\{\infty\}
\]
is injective.
\end{corollary}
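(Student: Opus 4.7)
The plan is to show that $\alpha(\tau_1) = \alpha(\tau_2) = z_0$ with $\tau_1, \tau_2 \in \dR \cup \{\infty\}$ forces $\tau_1 = \tau_2$. I would split into two stages.

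\textit{First stage: both $\tau_1,\tau_2 \in \dR$.} Here I would apply Theorem~\ref{charact} to recover each $\tau_i$ from $z_0$ as a single nontangential limit of $Q$ at $z_0$. If $z_0 \in \dC^+$, the identity $\tau_i = Q(z_0)$ is immediate from Theorem~\ref{charact}(i). If $z_0 \in \dR$, the existence of a finite limit of $(Q(z) - \tau_i)/(z - z_0)$ combined with $z - z_0 \to 0$ forces the numerator to vanish nontangentially, so $\tau_i = \lim_{z \wh\to z_0} Q(z)$. The case $z_0 = \infty$ is analogous, obtained by the same reasoning applied to $z(Q(z) - \tau_i)$. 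In every case $\tau_1$ and $\tau_2$ coincide with a single limit determined by $z_0$.

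\textit{Second stage: one parameter equals $\infty$.} Without loss of generality $\tau_2 = \infty$ and $\tau_1 \in \dR$, so $z_0 = \alpha(\infty) = \beta$ is the GPNT of $Q$. If $\tau_1 = 0$, then simultaneously $z_0 = \alpha(0) = \alpha$, hence $\alpha = \beta$, which is excluded by the remark following Theorem~\ref{factor}. If $\tau_1 \in \dR \setminus \{0\}$, I would pass to the $\mathbf{N}_1$-function $Q_{\tau_1}$ and use the reparametrization identity \eqref{tr1} to get
\[
\alpha_{\tau_1}(0) = \alpha(\tau_1), \qquad \alpha_{\tau_1}(1/\tau_1) = \alpha(\infty),
\]
where the second identity follows because the substitution $\rho = 1/\tau_1$ makes the denominator $1 - \rho\tau_1$ vanish, sending the argument of $\alpha$ in \eqref{tr1} to $\infty$. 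A hypothetical equality $\alpha(\tau_1) = \alpha(\infty)$ would then yield $\alpha_{\tau_1}(0) = \alpha_{\tau_1}(1/\tau_1)$ with $0 \ne 1/\tau_1$ both in $\dR$, contradicting the injectivity on $\dR$ that was established in the first stage, now applied to $Q_{\tau_1}$.

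The main obstacle I foresee lies in the $z_0 \in \dR$ subcase of the first stage: one must unpack Theorem~\ref{charact}(ii) to conclude not merely that the quotient has a finite limit, but that $Q(z) \wh\to \tau_i$ itself. Once this observation is in place, the second stage is a clean application of the fractional linear transformation bookkeeping in \eqref{tr1}, and no further analytic work is needed.
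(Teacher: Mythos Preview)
Your proof is correct, and the first stage matches the paper's argument exactly: both recover $\tau_i$ as the nontangential limit $\lim_{z\wh\to z_0} Q(z)$, using that a finite limit of the quotient in Theorem~\ref{charact}(ii)--(iii) forces the numerator to vanish.

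The second stage differs genuinely. The paper handles $\tau_1\in\dR$, $\tau_2=\infty$ directly: since $z_0=\alpha(\infty)=\beta$ is the GPNT of $Q$, one has $Q(z)\to\infty$ as $z\wh\to z_0$ (this follows from the GZNT condition applied to $-1/Q$), which contradicts $Q(z)\to\tau_1\in\dR$ obtained from the first-stage analysis. Your route instead splits off $\tau_1=0$ (where $\alpha=\beta$ is excluded outright) and for $\tau_1\ne 0$ passes to $Q_{\tau_1}$ via the reparametrization \eqref{tr1}, reducing to two finite parameters $0$ and $1/\tau_1$. Your computation that $(Q_{\tau_1})_{1/\tau_1}=Q_\infty$ (so $\alpha_{\tau_1}(1/\tau_1)=\alpha(\infty)$) is correct, even though the formula in \eqref{tr1} is formally singular at $\rho=1/\tau_1$. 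The paper's argument is shorter and avoids both the case split and the reparametrization bookkeeping; your approach has the mild advantage of not needing to verify the limiting behaviour of $Q$ at a GPNT.
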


\begin{proof}
Assume that $\alpha(\tau_1)=\alpha(\tau_2)$ with $\tau_1, \tau_2 \in \dR \cup \{\infty\}$.

First consider $\tau_1, \tau_2 \in \dR$ and let $z_0= \alpha(\tau_1)=\alpha(\tau_2)$.
If $z_0$ is in $\dC^+$, then (i) of Theorem \ref{charact}
implies $\tau_1=Q(\alpha(\tau_1)=Q(\alpha(\tau_2)=\tau_2$.
If $z_0$ is in $\dR\cup\{\infty\}$, then
(ii) and (iii) of Theorem \ref{charact} imply
\[
\tau_1=\lim_{z\wh\to \alpha(\tau_1)} Q(z)=\lim_{z\wh\to \alpha(\tau_2)} Q(z) =\tau_2.
\]

Next consider $\tau_1 \in \dR$ and $\tau_2=\infty$ and let $z_0= \alpha(\tau_1)=\alpha(\infty)$.
Then $\alpha(\infty)=z_0$ means that $z_0$ is a GPNT, so that $Q(z) \to \infty$ as $z \wh\to z_0$.  Furthermore, $\alpha(\tau_1)=z_0$ implies, by Theorem \ref{charact}),
that $Q(z) \to \tau_1$ as $z \wh\to z_0$, a contradiction.

Hence, $\alpha(\tau_1)=\alpha(\tau_2)$ with $\tau_1, \tau_2 \in \dR \cup \{\infty\}$, implies that
$\tau_1=\tau_2$.  This completes the proof.
\end{proof}

The following result can be seen as a consequence of Theorem \ref{limits}.

\begin{theorem}\label{tauonreal}
Let $Q(z) \in \mathbf{N}_1$ and let the interval $(\gamma,\delta) \subset \dR$ be contained in
$\mathcal{F}_Q$.   Then $Q(z)$ is holomorphic on $(\gamma,\delta)$ except possibly at the GPNT of $Q(z)$,
which is then a pole of $Q(z)$.
\end{theorem}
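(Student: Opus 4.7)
The plan is to transfer the holomorphy question from $Q$ to its Nevanlinna factor $M$ via the factorization in Theorem~\ref{factor}, and then apply Theorem~\ref{limits}. Fix $x\in(\gamma,\delta)\setminus\{\beta\}$. The hypothesis $(\gamma,\delta)\subseteq\mathcal{F}_Q$ together with the injectivity of $\alpha$ (Corollary~\ref{alphainjective}) yields a unique $\tau(x)\in\dR\cup\{\infty\}$ with $\alpha(\tau(x))=x$; since $\alpha(\infty)=\beta\neq x$, in fact $\tau(x)\in\dR$. Theorem~\ref{charact}(ii) then says that $(Q(z)-\tau(x))/(z-x)$ has a finite real non-tangential limit at $x$, which forces $Q(x+\ii y)\to\tau(x)\in\dR$, and in particular $\IM Q(x+\ii y)\to 0$, as $y\downarrow 0$.

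Now write $Q=RM$ with $R$ of one of the forms in \eqref{einz} and $M\in\mathbf{N}_0$. For every $x\in(\gamma,\delta)\setminus\{\alpha,\beta\}$ the factor $R$ is holomorphic at $x$ with $R(x)\in\dR\setminus\{0\}$, so
\[
  M(x+\ii y)=\frac{Q(x+\ii y)}{R(x+\ii y)}\longrightarrow\frac{\tau(x)}{R(x)}\in\dR,\qquad y\downarrow 0,
\]
and in particular $\IM M(x+\ii y)\to 0$. Applying Theorem~\ref{limits} on small open subintervals that avoid $\alpha$ and $\beta$, I conclude that $M$ is holomorphic on $(\gamma,\delta)\setminus\{\alpha,\beta\}$. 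Consequently the spectral measure $\sigma$ in \eqref{nev'} is carried inside $(\gamma,\delta)$ only by possible atoms at $\alpha$ and $\beta$, so $M$ has at most a simple pole (with nonpositive residue) at each of $\alpha,\beta\in(\gamma,\delta)\cap\dR$ and is holomorphic at every other point of the interval.

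Finally, if $\alpha\in(\gamma,\delta)\cap\dR$, inspection of the three forms in \eqref{einz} shows that $R$ has a double zero at $\alpha$, which more than cancels the at-most-simple pole of $M$, so $Q=RM$ is holomorphic at $\alpha$. This establishes holomorphy of $Q$ on $(\gamma,\delta)\setminus\{\beta\}$. If further $\beta\in(\gamma,\delta)\cap\dR$, then $Q$ is meromorphic near $\beta$ as a product of meromorphic functions; the defining property of the GPNT---namely that $\beta$ is a GZNT of $-1/Q$---combined with \eqref{x_0} gives $-1/Q(z)\to 0$ non-tangentially at $\beta$, so $|Q(z)|\to\infty$, and the isolated singularity at $\beta$ must be a pole. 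The main subtlety to justify is that $M$ contributes at most a simple pole at each real point of $(\gamma,\delta)$---an immediate consequence of the representation \eqref{nev'}, where an atom of $\sigma$ at a real point produces exactly a simple-pole term with nonpositive residue---so that the double zero of $R$ at $\alpha$ really does restore holomorphy of $Q$.
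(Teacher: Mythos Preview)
Your proof is correct and follows essentially the same route as the paper's: both reduce to showing $\IM M(x+\ii y)\to 0$ via Theorem~\ref{charact} and the factorization $Q=RM$, then invoke Theorem~\ref{limits}, and finally handle the exceptional points $\alpha$ and $\beta$ using the explicit form of $R$ in \eqref{einz}. The only difference is organizational---the paper splits into three cases according to whether $\alpha$ and/or $\beta$ lie in $(\gamma,\delta)$, whereas you treat them uniformly and are somewhat more explicit about why $M$ can have at most a simple pole at these points.
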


\begin{proof}
Since $Q(z) \in \mathbf{N}_1$, it can be written as $Q(z)=R(z)M(z)$ as in \eqref{fack} where $R(z)$ is of the
form as in \eqref{einz} with GZNT $\alpha$ and GPNT $\beta$. By assumption each $z_0 \in (\gamma,\delta)$ is
of the form $z_0=\alpha(\tau_0)$. Hence by Theorem \ref{charact} it follows that
\begin{equation}\label{frid}
 \lim_{z\wh\to z_0} Q(z)=\tau_0.
\end{equation}
Clearly, if $z_0=\alpha(\tau_0)$ with $\tau_0=\infty$ then $z_0=\beta$ so that $z_0$ is a GPNT of $Q(z)$.
There are three cases to consider:

\textit{Case 1:} $\beta \not\in (\gamma,\delta)$ and $\alpha \not\in (\gamma,\delta)$. Then  $\lim_{z\wh\to
z_0} R(z) \in \dR\setminus \{0\}$, so that it follows from \eqref{frid} that
\begin{equation}\label{frid1}
 \lim_{z\wh\to z_0} \IM M(z)=0.
\end{equation}
Hence, by Theorem \ref{limits}, it follows from \eqref{frid1}  that $M(z)$ and therefore $Q(z)$ is
holomorphic on $(\gamma,\delta)$.

\textit{Case 2:}  $\beta \not\in (\gamma,\delta)$ and $\alpha \in (\gamma,\delta)$. Then by Case~1 it follows
that  $Q(z)$ is holomorphic on $(\gamma,\alpha)$ and on $(\alpha, \delta)$. Hence, either $Q(z)$ is
holomorphic on $(\gamma,\delta)$ or $Q(z)$ has an isolated singularity at $\alpha$. However, this last case
cannot occur due to the representation \eqref{einz}.

\textit{Case 3:} $\beta \in (\gamma,\delta)$. Then by Case 1 and Case 2 it follows that $Q(z)$ is holomorphic
on $(\gamma,\beta)$ and $(\beta,\delta)$. This implies that $\beta$ is an isolated singularity of $Q(z)$; in
other words the GPNT $\beta$ is a pole of $Q(z)$.
\end{proof}

\section{Local behavior of $\alpha(\tau)$ in a gap of $Q(z)$.}\label{Gap}

Let the function  $Q(z) \in \mathbf{N}_1$ be holomorphic in a neighborhood of  $z_0 \in \dR$. Assume that
$Q(z_0)=0$ and that $z_0$ is in fact a GZNT of $Q(z)$, so that $Q'(z_0) \le 0$; cf. Proposition
\ref{zeros>0}. The local form of the path $\alpha(\tau)$ in a neighborhood of $\tau=0$ will now be described.
The items in the following theorem correspond to the classification in Proposition~\ref{zeros>0}.

\begin{theorem}\label{mainth}
Let  $Q(z) \in \mathbf{N}_1$ be holomorphic in a neighborhood of $z_0 \in \dR$ and let $z_0$ be a GZNT of
$Q(z)$. Then precisely one of the following possibilities hold.
\begin{itemize}
\item[(1)]  $Q'(z_0)<0$:
There exists $\varepsilon > 0$ such that the function $\alpha(\tau)$ is real-valued and holomorphic with
$\alpha'(\tau)<0$ on $(-\varepsilon, \varepsilon)$.

\begin{figure}[htb]

\begin{center}
\begin{picture}(100,12)(0,0)
\linethickness{0.05mm} \put(0,5){\vector(1,0){100}}
                       \put(55,4){\line(0,1){2}}
                       \put(55,2){\makebox(0,0)[cc]{$\alpha(0)$}}
 \linethickness{0.6mm} \put(0,5){\line(1,0){30}}
                      \put(70,5){\line(1,0){20}}
                      \put(15,2){\makebox(0,0)[cc]{$\supp\sigma$}}
 \thicklines          \put(60,5){\vector(-1,0){15}}
                       \put(45,5){\line(-1,0){5}}
                       \put(45,8){\makebox(0,0)[cc]{$\alpha(\tau)$}}
\end{picture}
\caption{Case (1)}
\end{center}
\end{figure}

\item[(2a)] $Q'(z_0)=0$ and $Q''(z_0) >0$:
There exist $\varepsilon_1 > 0$ and $\varepsilon_2 > 0$ such that the function $\alpha(\tau)$ is continuous
on $(-\varepsilon_1,\varepsilon_2)$, and holomorphic on each of the intervals $(-\varepsilon_1,0)$ and
$(0,\varepsilon_2)$. Moreover $\alpha(\tau) \in \dC^+$ for $\tau \in (-\varepsilon_1,0)$ and $\arg
(\alpha(\tau)-z_0)\to \pi/2$ as $\tau \uparrow 0$ and $\alpha(\tau) \in \dR$ for $\tau \in
(0,\varepsilon_2)$.

\item[(2b)] $Q'(z_0)=0$ and $Q''(z_0) < 0$:
There exist $\varepsilon_1 > 0$ and $\varepsilon_2 > 0$ such that the function $\alpha(\tau)$ is continuous
on $(-\varepsilon_1,\varepsilon_2)$, and holomorphic on each of the intervals $(-\varepsilon_1,0)$ and
$(0,\varepsilon_2)$. Moreover $\alpha(\tau) \in \dR$ for $\tau \in (-\varepsilon_1,0)$ and $\alpha(\tau) \in
\dC^+$ for $\tau \in (0,\varepsilon_2)$ and $\arg (\alpha(\tau)-z_0)\to \pi/2$ as $\tau \downarrow 0$.

\begin{figure}[htb]
\begin{center}
\begin{picture}(105,22)(0,0)
\linethickness{0.05mm} \put(0,5){\line(1,0){50}}
                       \put(55,5){\line(1,0){50}}
                       \linethickness{0.6mm}
                       \put(35,5){\line(1,0){15}}
                      \put(45,2){\makebox(0,0)[cc]{$\supp\sigma$}}
                      \put(90,5){\line(1,0){15}}
                      \put(100,2){\makebox(0,0)[cc]{$\supp\sigma$}}
 \thicklines          \put(25,5){\vector(-1,0){10}}
                       \put(15,5){\line(-1,0){5}}
                     \put(25,11){\vector(0,-1){3}}
                       \put(25,14){\line(0,-1){9}}
                       \put(20,14){\makebox(0,0)[cc]{$\alpha(\tau)$}}
                       \put(25,2){\makebox(0,0)[cc]{$\alpha(0)$}}
                        \put(28,7){\makebox(0,0)[cc]{\tiny $\frac{\pi}2$}}
                        \qbezier(25,14)(25,17)(26,20)

                       \put(80,5){\vector(-1,0){10}}
                       \put(70,5){\line(-1,0){5}}
                       \put(65,9){\vector(0,1){3}}
                       \put(65,14){\line(0,-1){9}}
                       \put(70,14){\makebox(0,0)[cc]{$\alpha(\tau)$}}
                       \put(65,2){\makebox(0,0)[cc]{$\alpha(0)$}}
                        \put(68,7){\makebox(0,0)[cc]{\tiny $\frac{\pi}2$}}
                       \qbezier(65,14)(65,17)(66,20)
\end{picture}
\caption{Cases (2a) and (2b)}
\end{center}
\end{figure}
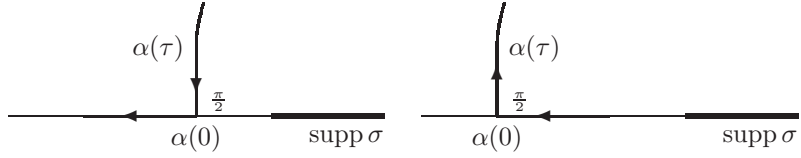

\item[(3)]
$Q'(z_0)=Q''(z_0)=0$,  and $Q'''(z_0)>0$: There exist $\varepsilon_1 > 0$ and $\varepsilon_2 > 0$ such that
the function $\alpha(\tau)$ is continuous on $(-\varepsilon_1,\varepsilon_2)$, and holomorphic on each of the
intervals $(-\varepsilon_1,0)$ and $(0,\varepsilon_2)$. Moreover $\alpha(\tau) \in \dC^+$ for $\tau \in
(-\varepsilon_1,0)$ and $\arg (\alpha(\tau)-z_0)\to \pi/3$ as $\tau \uparrow 0$; and $\alpha(\tau) \in \dC^+$
for $\tau \in (0,\varepsilon_2)$ and $\arg (\alpha(\tau)-z_0)\to 2\pi/3$ as $\tau \downarrow 0$.
\begin{figure}[hbt]
\begin{center}
\begin{picture}(100,20)(0,0)
\linethickness{0.05mm} \put(0,5){\vector(1,0){100}}
                       \put(55,4){\line(0,1){2}}
                        \put(55,2){\makebox(0,0)[cc]{$\alpha(0)$}}
 \linethickness{0.6mm} \put(0,5){\line(1,0){30}}
                      \put(70,5){\line(1,0){20}}
                      \put(15,2){\makebox(0,0)[cc]{$\supp\sigma$}}
 \thicklines         \put(63,17){\vector(-2,-3){4}}
                      \put(59,11){\line(-2,-3){4}}
                      \put(55,5){\vector(-2,3){4}}
                      \qbezier(51,11)(48,17)(42,18)
                     \put(40,15){\makebox(0,0)[cc]{$\alpha(\tau)$}}
                     \put(55,9){\makebox(0,0)[cc]{\tiny $\frac{\pi}3$}}
                    \put(59,7){\makebox(0,0)[cc]{\tiny $\frac{\pi}3$}}
                   \put(51,7){\makebox(0,0)[cc]{\tiny $\frac{\pi}3$}}
\end{picture}
\caption{Case (3) }
\end{center}
\end{figure}

\end{itemize}
\end{theorem}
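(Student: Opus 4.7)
The plan is to reduce the analysis of $\alpha(\tau)$ near $\tau=0$ to the local inversion of the equation $Q(z)=\tau$ near $z_0$, using the branched inverse function theorem (Theorem~\ref{inver'}), and then to single out the GZNT among the resulting branches via Theorem~\ref{charact}. Proposition~\ref{zeros>0} already supplies the four-case trichotomy, which matches precisely the choices $n=1,2,3$ in Theorem~\ref{inver'}.

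In case~(1), apply Theorem~\ref{inver'} with $n=1$: one obtains a unique holomorphic branch $\phi(\tau)$ with $\phi(0)=z_0$ and $\phi'(0)=1/Q'(z_0)<0$. Because $Q(\bar z)=\overline{Q(z)}$, this branch is real-valued for real $\tau$, and by continuity $Q'(\phi(\tau))<0$ on a neighborhood of $0$. Theorem~\ref{charact}(ii) then identifies $\phi(\tau)$ as $\alpha(\tau)$, and $\alpha'(0)=1/Q'(z_0)<0$ is immediate.

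For cases~(2a) and (2b), apply Theorem~\ref{inver'} with $n=2$ to obtain two branches $\phi^1,\phi^2$ of $Q(z)=w^2$ whose leading coefficients satisfy $(\phi^i_1)^2=2/Q''(z_0)$. If $Q''(z_0)>0$ the coefficients are real with opposite signs; if $Q''(z_0)<0$ they are purely imaginary. Parameterizing $w=\sqrt\tau$ for $\tau>0$ and $w=\ii\sqrt{-\tau}$ for $\tau<0$ yields, in each sub-case, either a pair of real candidates on opposite sides of $z_0$ or a complex-conjugate pair. Among the real candidates, the sign of $Q'$ near $z_0$ (controlled by the sign of $Q''(z_0)$) singles out the unique one with $Q'(\alpha(\tau))<0$; this is the GZNT by Theorem~\ref{charact}(ii). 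Among the complex candidates the one in $\dC^+$ is the GZNT by Theorem~\ref{charact}(i), and its leading term shows that $\arg(\alpha(\tau)-z_0)\to\pi/2$. Continuity at $\tau=0$ follows since each $\phi^i(w)\to z_0$ as $w\to 0$.

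Case~(3) is analogous with $n=3$: Theorem~\ref{inver'} yields three branches with leading coefficients $c\,\zeta^{i-1}$, where $c=\sqrt[3]{6/Q'''(z_0)}>0$ and $\zeta=e^{2\pi\ii/3}$. Taking $w=\tau^{1/3}$ for $\tau>0$ and $w=-|\tau|^{1/3}$ for $\tau<0$, the arguments of the three candidate values of $z-z_0$ tend to $\{0,\pm 2\pi/3\}$ and $\{\pi,\pm \pi/3\}$, respectively. Since $Q'(z)=\frac{1}{2}Q'''(z_0)(z-z_0)^2+\cdots>0$ on the reals near $z_0$, no real candidate is a GZNT, so the unique candidate in $\dC^+$ is $\alpha(\tau)$, yielding limiting arguments $2\pi/3$ and $\pi/3$ for $\tau\uparrow 0$ and $\tau\downarrow 0$ (after relabeling). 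The only point requiring care throughout is the branch-selection step: one must verify that the chosen branch in each sub-case truly satisfies the non-tangential GZNT characterization of Theorem~\ref{charact}, i.e.\ that the sign analysis of $Q'$ near $z_0$ deduced from the Taylor expansion matches the cases of Proposition~\ref{zeros>0}. This is the only nontrivial bookkeeping in the argument.
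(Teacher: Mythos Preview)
Your proposal is correct and follows essentially the same route as the paper: both arguments reduce to the branched inverse function theorem (Theorem~\ref{inver'}) with $n=1,2,3$ according to the classification of Proposition~\ref{zeros>0}, parameterize $w$ by $\tau^{1/n}$ (respectively $\ii|\tau|^{1/2}$, $-|\tau|^{1/3}$), and then invoke Theorem~\ref{charact} to select the correct branch as the GZNT. One small wording issue: in Case~(3) you write ``limiting arguments $2\pi/3$ and $\pi/3$ for $\tau\uparrow 0$ and $\tau\downarrow 0$ (after relabeling)'', which reverses the pairing stated in the theorem; your own branch analysis in fact gives $\arg\to\pi/3$ as $\tau\uparrow 0$ and $\arg\to 2\pi/3$ as $\tau\downarrow 0$, so simply correct the order rather than hiding it behind ``after relabeling''.
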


\begin{proof}
The assumption is that $Q(z) \in \mathbf{N}_1$ is holomorphic in a neighborhood of $z_0 \in \dR$ and that
$Q(z)$, possibly together with some derivatives,  vanishes at $z_0 \in \dR$, as described in Proposition
\ref{zeros>0}. The theorem will be proved via Theorem \ref{inver'} (implicit function theorem).

\textit{Case (1): $Q(z_0)=0$ and $Q'(z_0)<0$.} According to Theorem \ref{inver'} with $n=1$  there is some neighborhood of $w=0$ where the equation
\begin{equation}\label{Qphi}
 Q(\phi(w))=w
\end{equation}
has a unique holomorphic solution $\phi(w)$ which satisfies $\phi(0)=z_0$ and is real-valued for real $w$. It
follows from \eqref{Qphi} that $Q'(\phi(w))\phi'(w)=1$, so that the condition $Q'(z_0)<0$ implies that
$\phi'(0)<0$, and thus $\phi'(w)<0$ on some neighborhood $(-\varepsilon, \varepsilon)$.  It follows from
Theorem \ref{charact} that $\alpha(\tau)=\phi(\tau)$.

\textit{Case (2a): $Q(z_0)=0$, $Q'(z_0)=0$, and $Q''(z_0) >0$.} According to Theorem \ref{inver'} with $n=2$
 there is some neighborhood of $0$ where the equation
\[
Q(\phi^\pm(w))=w^2
\]
has holomorphic solutions $\phi^+(w)$ and $\phi^-(w)$ with $\phi^\pm(0)=z_0$, and which have expansions
\[
\phi^\pm(w)=\phi_1^\pm w+\phi_2^\pm w^2+ \cdots ,
\]
where $\phi_1^\pm=\pm (Q''(z_0)/2)^{-1/2}$. Note that all the coefficients $\phi_i^\pm$ in the above
expansions are real.

Let $\tau>0$. Put $w=\tau^{1/2}$ such that
\begin{equation}\label{pmtau}
Q(\phi^\pm(\tau^{1/2}))=\tau.
\end{equation}
Since $\phi_1^-<0$, there is some $\varepsilon_2>0$ such that $\phi^{-'}(\tau^{1/2})<0$ for
$\tau\in(0,\varepsilon_2)$. Now the relation \eqref{pmtau} implies by taking the derivative with respect to
$\tau$ that $Q'(\phi^-(\tau^{1/2}))<0$ for $\tau\in(0,\varepsilon_2)$. Hence, by relation \eqref{pmtau} and
Theorem \ref{charact} one finds that $\alpha(\tau)=\phi^-(\tau^{1/2})$ for $\tau\in(0,\varepsilon_2)$.

Let $\tau<0$. Put $w=i|\tau|^{1/2}$ such that
\begin{equation}\label{pmtaui}
Q(\phi^\pm(i|\tau|^{1/2}))=\tau.
\end{equation}
Since $\phi_1^+>0$, there is some $\varepsilon_1>0$ such that $(\phi^{+})'(i|\tau|^{1/2})\in\dC^+$ for
$\tau\in(-\varepsilon_1,0)$. Hence, by relation \eqref{pmtaui} and Theorem \ref{charact} one finds that
$\alpha(\tau)=\phi^+(i|\tau|^{1/2})$ for $\tau\in(-\varepsilon_1,0)$.

The expansion of $\phi^+(i|\tau|^{1/2})$ implies that $\arg (\alpha(\tau)-z_0)\to \pi/2$ as $\tau\uparrow 0$.

\textit{Case (2b): $Q(z_0)=0$, $Q'(z_0)=0$, and $Q''(z_0)<0$.} This case can be treated similarly as the case
(2a).

\textit{Case (3): $Q(z_0)=0$, $Q'(z_0)=0$, $Q''(z_0)=0$, and $Q'''(z_0) > 0$.} According to Theorem
\ref{inver'} there is a neighborhood of $0$, where the equation
$$
Q(\phi(w))=w^3
$$
 has three solutions $\phi^{(j)}(w)$, $j=1,2,3$,  determined by
\[
\phi^{(1)}_1=\sqrt[3]{r},\quad
 \phi^{(2)}_1=\left(-\frac12+\frac{\sqrt{3}}2\ii\right)\sqrt[3]{r},\quad
  \phi^{(3)}_1 = \left(-\frac12-\frac{\sqrt{3}}2\ii\right)\sqrt[3]{ r},
\]
with  $r=6/Q'''(z_0)$.

Let $\tau>0$. Then  Theorem \ref{inver'} implies that there is some $\varepsilon_2>0$ such that
$\phi^{(2)}(\tau^{1/3})$ is in $\dC^+$ for $\tau\in( 0,\varepsilon_2)$, and that
$Q(\phi^{(2)}(\tau^{1/3}))=\tau$. Hence, by Theorem \ref{charact}
\[
\alpha(\tau)=\phi^{(2)}(\tau^{1/3}),\qquad \tau\in(0,\varepsilon_2).
\]

Let $\tau<0$. Then the Theorem \ref{inver'} implies that there is some $\varepsilon_1>0$ such that
$\phi^{(3)}(-|\tau|^{1/3})$ is in $\dC^+$ for $\tau\in (-\varepsilon_1,0)$, and that
$Q(\phi^{(3)}(-|\tau|^{1/3}))=-|\tau|=\tau$. Hence, by Theorem \ref{charact}
\[
\alpha(\tau)=\phi^{(3)}(-|\tau|^{1/3})),\qquad \tau\in(-\varepsilon_1,0)<0.
\]
Moreover, using Theorem \ref{inver'} one finds that
\[
\lim_{\tau\uparrow0}\tan(\arg(\alpha(\tau)-z_0))=\lim_{\tau\uparrow0}\frac{\IM\alpha(\tau)}{\RE\alpha(\tau)-z_0}=
 \frac{\IM\phi_1^{(3)}}{\RE\phi_1^{(3)}}
 =\sqrt{3},
\]
and
\[
\lim_{\tau\downarrow0}\tan(\arg(\alpha(\tau)-z_0))=\lim_{\tau\downarrow0}\frac{\IM\alpha(\tau)}{\RE\alpha(\tau)-z_0}=
 \frac{\IM\phi_1^{(2)}}{\RE\phi_1^{(2)}}
 =-\sqrt{3},
\]
which shows that the angles are indeed $\pi/3$ and $2\pi/3$, respectively.
\end{proof}

\begin{remark}\label{R2}
In Case (2) and Case (3) other zeros of $Q_\tau(z)$ will occur, but they need not be GZNT.
Below each case will be considered separately.

For Case (2) it suffices to restrict to Case (2a) as Case (2b) is similar. When $\tau>0$ the
function $\alpha_+(\tau):=\phi^+(\tau^{1/2})$ is also a local solution of $Q(\alpha_+(\tau))=\tau$. However,
$Q'(\alpha_+(\tau))>0$ for $\tau>0$, so that $\alpha_+(\tau)$ is not a GZNT. Moreover, $\alpha_+(\tau)$ is
locally increasing in $\tau$. A different situation happens when $\tau<0$. Then clearly $\phi^-(-\ii
\tau^{1/2})=\phi^+(\ii \tau^{1/2})$, and locally $\bar\alpha(\tau)=\phi^+(-\ii \tau^{1/2})$ is the GZNT of
$Q_\tau(z)$ in the lower half plane conjugate to the GZNT $\alpha(\tau)$, cf \cite{KL71}.

\begin{figure}[htb]
\begin{center}
\begin{picture}(105,25)(0,10)
\linethickness{0.05mm} \put(0,20){\line(1,0){50}}
                       \put(55,20){\line(1,0){50}}
 \thicklines \put(25,20){\vector(-1,0){10}}
                       \put(15,20){\line(-1,0){5}}
                       \put(25,20){\vector(1,0){10}}
                       \put(35,20){\line(1,0){5}}
                       \put(25,26){\vector(0,-1){3}}
                       \put(25,29){\line(0,-1){9}}
                       \put(25,14){\vector(0,1){3}}
                       \put(25,11){\line(0,1){9}}
\put(20,29){\makebox(0,0)[cc]{$\alpha(\tau)$}} \put(13,17){\makebox(0,0)[cc]{$\alpha(\tau)$}}
\put(37,17){\makebox(0,0)[cc]{$\alpha_+(\tau)$}} \put(30,11){\makebox(0,0)[cc]{$\bar\alpha(\tau)$}}
                       \put(65,20){\vector(1,0){10}}
                       \put(75,20){\line(1,0){5}}
                       \put(95,20){\vector(-1,0){10}}
                       \put(85,20){\line(-1,0){5}}
                       \put(80,24){\vector(0,1){3}}
                       \put(80,29){\line(0,-1){9}}
                       \put(80,16){\vector(0,-1){3}}
                       \put(80,11){\line(0,1){9}}
\put(75,29){\makebox(0,0)[cc]{$\alpha(\tau)$}} \put(68,17){\makebox(0,0)[cc]{$\alpha_+(\tau)$}}
\put(92,17){\makebox(0,0)[cc]{$\alpha(\tau)$}} \put(85,11){\makebox(0,0)[cc]{$\bar\alpha(\tau)$}}
\end{picture}
\caption{Cases (2a) and (2b) - positive and negative zeros}
\end{center}
\end{figure}
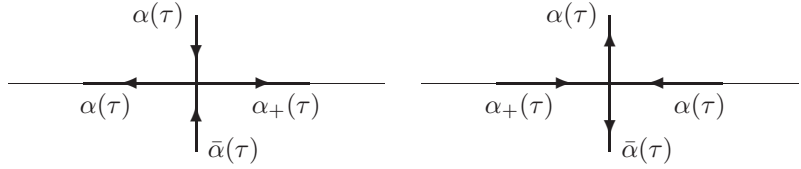

In Case (3) there are locally two other zeros of the function $Q_\tau(z)$ crossing $z_0$ at $\tau=0$. The
first of them is real and given by
\[
  \alpha_+(\tau)=\phi^{(1)}(\sgn\tau |\tau|^{1/3}).
\]
The second one lies in the lower half plane and is conjugate to $\alpha(\tau)$.

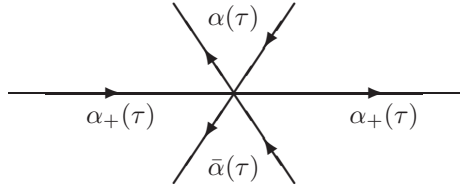
\begin{figure}[htb]
\begin{center}
\begin{picture}(100,27)(0,0)
\linethickness{0.05mm} \put(20,12){\line(1,0){60}}
 \thicklines\put(58,24){\vector(-2,-3){4}}
                      \put(54,18){\line(-2,-3){4}}
                      \put(50,12){\vector(-2,3){4}}
                      \put(46,18){\line(-2,3){4}}
                      \put(50,22){\makebox(0,0)[cc]{$\alpha(\tau)$}}
                       \put(58,0){\vector(-2,3){4}}
                       \put(54,6){\line(-2,3){4}}
                       \put(50,12){\vector(-2,-3){4}}
                       \put(46,6){\line(-2,-3){4}}
                      \put(50,2){\makebox(0,0)[cc]{$\bar\alpha(\tau)$}}
                        \put(25,12){\vector(1,0){10}}
                        \put(35,12){\vector(1,0){35}}
                        \put(70,12){\line(1,0){5}}
                        \put(35,9){\makebox(0,0)[cc]{$\alpha_+(\tau)$}}
                        \put(70,9){\makebox(0,0)[cc]{$\alpha_+(\tau)$}}
\end{picture}
\caption{Case (3) - positive and negative zeros}
\end{center}
\end{figure}
\end{remark}

\begin{example}\label{drie}
Let  $\delta_1, \delta_2 >0$ and consider the function
\[
 Q(z)=z^2 \left( \frac{\delta_1}{-1-z}+\frac{\delta_2}{1-z} \right), \quad z \in \dC \setminus\{-1,1\}.
\]
Then $Q(z)$ is of the form \eqref{fack} with
\[
 R(z)=z^2, \quad M(z)=\frac{\delta_1}{-1-z}+\frac{\delta_2}{1-z},
\]
with $M(z) \in \mathbf{N}_0$. Hence $Q(z) \in \mathbf{N}_1$ with GZNT at $\alpha=0$ and GPNT at
$\beta=\infty$. The function $Q(z)$ is holomorphic in a neighborhood of $z=0$. Clearly, $Q(0)=0$, $Q'(0)=0$, and $Q''(0)=2(\delta_2-\delta_1)$. Hence,
\[
Q''(0)=0 \quad \Leftrightarrow \quad \delta_1=\delta_2,
\]
in which case $Q'''(0)=12 \delta_1=12 \delta_2>0$.  Therefore with regard to Theorem \ref{mainth}
one obtains
\begin{enumerate}
\item $\delta_2 > \delta_1$ implies Case 2a;
\item $\delta_2 < \delta_1$ implies Case 2b;
\item $\delta_2 = \delta_1$ implies Case 3.
\end{enumerate}
\end{example}

\section{Part of the path on the real line.}

Let $Q(z) \in \mathbf{N}_1$. The path of the GZNT may hit the real line coming from $\dC^+$ and immediately
return to $\dC^+$ as the example $Q(z)=z^3$ shows. However it is also possible that the path will have a part
of  the real line in common as the example $Q(z)=z^2$ shows. In this section it is assumed that  the GZNT
$\alpha$ belongs to the real line and the interest is in the existence of $\ep > 0$ such that
\[
[\alpha-\ep,\alpha]\subset\mathcal{F}_Q \quad \mbox{or} \quad
[\alpha,\alpha+\ep]\subset\mathcal{F}_Q.
\]
According to Theorem \ref{tauonreal} this implies that   $M(z)$ is holomorphic on
\[
(\alpha-\ep,\alpha) \quad \mbox{or} \quad (\alpha,\alpha+\ep),
\]
respectively. Recall that if the function $M(z) \in \mathbf{N}_0$ is holomorphic on an interval
$(\alpha,\beta)$, then $M(z)$ has (possibly improper) limits at the end points of the interval and  $M(\alpha+) \in [-\infty, \infty)$ and $M(\beta-) \in (-\infty, \infty]$. The fact that those limits are equal to the nontangential limits at $\alpha$ and $\beta$, respectively, will be extensively used in the proof below.
 
\begin{theorem}\label{realpath}
Let $Q(z)\in\mathbf{N}_1$ be of the form
\[
 Q(z)=(z-\alpha)^2M(z) \quad \mbox{or} \quad
 Q(z)=\frac{(z-\alpha)^2}{(z-\beta)(z-\bar{\beta})} M(z),
\]
with  $\alpha \in \dR$, $\beta \in \dC^+ \cup \dR$, $\beta \neq \alpha$, and $M(z) \in \mathbf{N}_0$. Then
the following statements are valid:
\begin{enumerate}\def\labelenumi{\rm (\roman{enumi})}
\item
There exists $\varepsilon>0$ such that $(\alpha-\ep,\alpha]\subset \mathcal{ F}_Q$ if and only if
$M(z)$ is holomorphic on $(\alpha-\gamma,\alpha)$ for some $\gamma >0$
and $M(\alpha-) \in (0,\infty]$. 
\item
There exists  $\ep>0$ such that $[\alpha,\alpha+\ep)\subset \mathcal{F}_Q$ if and only if 
$M(z)$ is holomorphic on $(\alpha,\alpha+\gamma)$ for some $\gamma>0$
 and $M(\alpha+) \in [-\infty, 0)$.
\end{enumerate}
\end{theorem}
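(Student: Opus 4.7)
I would prove (i); (ii) follows by a symmetric sign reversal. Writing $Q(z)=(z-\alpha)^{2}R(z)M(z)$ with $R(z)$ either $1$ or $1/((z-\beta)(z-\bar\beta))$, the factor $R$ is holomorphic and strictly positive on a real neighbourhood of $\alpha$ (using $\beta\ne\alpha$). Since $\alpha=\alpha(0)\in\mathcal{F}_{Q}$ automatically, only $x\in(\alpha-\ep,\alpha)$ is at issue. The plan has three stages: (a) use Theorem~\ref{tauonreal} to reduce the interval hypothesis to holomorphy of $M$ on some $(\alpha-\gamma,\alpha)$; (b) apply Theorem~\ref{charact}(ii) at $\tau_{x}:=Q(x)$ together with \eqref{QQ'} to convert ``$x\in\mathcal{F}_{Q}$'' into the pointwise condition $Q'(x)\le 0$ on this holomorphy domain (the nontangential limit reduces to the derivative); (c) analyse the sign of $Q'(x)$ via the integral representation \eqref{nev'} of $M$.

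The product rule yields $Q'(x)=(x-\alpha)\Psi(x)$ with
\[
\Psi(x)=R(x)\bigl[2M(x)+(x-\alpha)M'(x)\bigr]+(x-\alpha)R'(x)M(x),
\]
so, since $x-\alpha<0$, the theorem reduces to the equivalence between ``\emph{$M$ is holomorphic on some $(\alpha-\gamma,\alpha)$ with $M(\alpha-)\in(0,\infty]$}'' and ``\emph{$\Psi\ge 0$ on some $(\alpha-\ep,\alpha)$}''. For necessity, by \eqref{gapp} $M'\ge 0$, so $M$ is nondecreasing and $M(\alpha-)\in(-\infty,\infty]$. The coefficient of $M(x)$ in $\Psi$ tends to $2R(\alpha)>0$ and $R(x)(x-\alpha)M'(x)\le 0$; hence $M(\alpha-)<0$ makes $\Psi$ strictly negative near $\alpha^{-}$, while $M(\alpha-)=0$ combined with monotonicity gives $M\le 0$ on the interval and forces $\Psi\le 0$, and then $\Psi\ge 0$ forces $M\equiv 0$, hence $Q\equiv 0\notin\mathbf{N}_{1}$. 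Both cases contradict the hypothesis, so $M(\alpha-)\in(0,\infty]$.

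For sufficiency assume $M(\alpha-)\in(0,\infty]$. Split $M=M_{\mathrm{sing}}+M_{\mathrm{good}}$ where $M_{\mathrm{sing}}(z):=\int_{[\alpha,\alpha+1]}d\sigma(t)/(t-z)$ carries the spectral mass that can make $M$ singular at $\alpha^{-}$ and $M_{\mathrm{good}}$ is holomorphic on a real neighbourhood of $\alpha$. With $\eta:=\alpha-x>0$ and $d\mu(s):=d\sigma(\alpha+s)$ on $[0,1]$, a direct calculation yields
\[
2M_{\mathrm{sing}}(x)+(x-\alpha)M_{\mathrm{sing}}'(x)=\int_{0}^{1}\frac{2s+\eta}{(s+\eta)^{2}}\,d\mu(s),
\]
and the elementary inequality $(2s+\eta)/(s+\eta)^{2}\ge 1/(s+\eta)$ on $s\ge 0,\eta>0$ gives the lower bound $2M_{\mathrm{sing}}(x)+(x-\alpha)M_{\mathrm{sing}}'(x)\ge M_{\mathrm{sing}}(x)$. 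When $M(\alpha-)$ is finite, dominated convergence yields $2M(x)+(x-\alpha)M'(x)\to 2M(\alpha-)>0$; when $M(\alpha-)=+\infty$, one has $M_{\mathrm{sing}}(x)\to+\infty$, the lower bound pushes $2M_{\mathrm{sing}}+(x-\alpha)M_{\mathrm{sing}}'$ to $+\infty$, and this dominates the bounded good-part contribution $2M_{\mathrm{good}}(\alpha)+o(1)$. The leftover $(x-\alpha)R'(x)M(x)$ stays bounded because $|(x-\alpha)M(x)|\le\mu([0,1])+o(1)$ ($\eta/(s+\eta)\le 1$ and $\mu$ has finite total mass). Multiplying by $R(x)>0$ and combining, $\Psi(x)>0$ for $x$ close enough to $\alpha^{-}$, and the reduction gives $(\alpha-\ep,\alpha]\subset\mathcal{F}_{Q}$.

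The main obstacle is the case $M(\alpha-)=+\infty$, since $(x-\alpha)M'(x)$ may itself diverge and the usual continuity argument fails; the elementary inequality $(2s+\eta)/(s+\eta)^{2}\ge 1/(s+\eta)$ is the key device, making $2M_{\mathrm{sing}}+(x-\alpha)M_{\mathrm{sing}}'$ dominate $M_{\mathrm{sing}}$ itself and thereby forcing $\Psi(x)\to+\infty$. Part (ii) is handled symmetrically: the singular part of $\sigma$ now lies in $[\alpha-1,\alpha]$, the substitution $u:=\alpha-t\ge 0$ reverses the overall sign of the analogous identity, and $M(\alpha+)\in[-\infty,0)$ replaces $M(\alpha-)\in(0,\infty]$.
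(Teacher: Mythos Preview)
Your argument is correct and takes a genuinely different route from the paper. Both proofs reduce, via Theorem~\ref{tauonreal} and Theorem~\ref{charact}(ii), to determining when $Q'(x)\le 0$ on a left interval of $\alpha$, but from there they diverge.

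The paper (after translating $\alpha$ to $0$) first treats the form $Q(z)=z^{2}M(z)$ by expanding $Q'(z)$ through the full integral representation of $M$ and then runs a three-way case split according to whether $\sigma(\{0\})>0$, or $\sigma(\{0\})=0$ with $\int_{(0,1/2)}s^{-1}\,d\sigma=\infty$, or $\sigma(\{0\})=0$ with $\int_{(0,1/2)}s^{-1}\,d\sigma<\infty$; each case requires its own dominated/monotone convergence estimates. The second form $Q(z)=z^{2}M(z)/((z-\beta)(z-\bar\beta))$ is handled in a separate step by first proving the auxiliary limit $Q'(z)/Q(z)\to -\infty$ as $z\uparrow 0$ (again in all three cases) and feeding this into a logarithmic-derivative decomposition.

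You treat both forms at once by writing $Q(z)=(z-\alpha)^{2}R(z)M(z)$ with $R$ holomorphic and positive near $\alpha$, and you collapse the entire case analysis into two observations: the identity
\[
2M_{\mathrm{sing}}(x)+(x-\alpha)M_{\mathrm{sing}}'(x)=\int_{0}^{1}\frac{2s+\eta}{(s+\eta)^{2}}\,d\mu(s)
\]
and the elementary pointwise bound $(2s+\eta)/(s+\eta)^{2}\ge 1/(s+\eta)$. The latter immediately gives $2M_{\mathrm{sing}}+(x-\alpha)M_{\mathrm{sing}}'\ge M_{\mathrm{sing}}$, which handles the point-mass case and the slowly-divergent case in a single stroke, while the bound $\eta/(s+\eta)\le 1$ keeps the cross term $(x-\alpha)R'(x)M(x)$ under control. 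Your route is shorter and avoids both the three-way case split and the separate logarithmic-derivative reduction; the paper's route, on the other hand, delivers the sharper byproduct $\lim_{z\uparrow\alpha}Q'(z)/Q(z)=-\infty$, which your inequality does not make visible.
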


\begin{proof}
Since  the proofs of the statements (i) and (ii)  are analogous, only (i) will be shown. By a translation the
statement can be easily reduced to the case $\alpha=0$.  Hence, from now on it is assumed that $M(z)$ is
holomorphic on some interval $(-\gamma,0)$, with $\gamma>0$. The proof will be carried out for the functions
\[
 z^2M(z) \quad \mbox{and} \quad \frac{z^2M(z)}{(z-\beta)(z-\bar{\beta})},
\]
respectively, in three steps. \\

\textit{Step 1.}  The function $R(z)=z^2M(z)$ is considered. Since $M(z) \in \mathbf{N}_0$ it has the
representation \eqref{nev'}.  Recall that
\begin{equation}\label{hash}
 \lim_{z \uparrow 0} -z M(z)=\sigma(\{0\}),
\end{equation}
cf. \eqref{nev++}. Note that if $\sigma(\{0\})=0$,  dominated convergence on $(-\infty,-\gamma)$  and monotone convergence on $(0, \infty)$ implies that
\begin{equation}\label{ooo}
M(0-)=a+\int_{\dR }   \frac{d\sig(s)}{s (s^2+1)} \in \dR \cup \{\infty\},
\end{equation}
cf. \eqref{rreff}.

In the general case $\sigma(\{0\}) \neq 0$
the function $R(z)= z^2M(z)$ can be written as
\[
 R(z)=az^2+bz^3 -z \sigma(\{0\})
 +z^2 \int_{\dR \setminus \{0\}} \left( \frac{1}{s-z}-\frac{s}{s^2+1} \right) d\sigma(s)
\]
and  it is holomorphic on $(-\gamma, 0)$. A straightforward calculation implies that
\begin{equation}\label{kuhstrich}
R'(z)=-\sig(\{0\})+2az+3bz^2+zT(z),
\end{equation}
where the function $T(z)$ is given by
$$
T(z)=\int_{\dR\setminus \{0\}} h(s,z) \,\frac{d\sig(s)}{s-z},
$$
with
$$
h(s,z)=\frac{2+2sz}{s^2+1}+\frac{z}{s-z}.
$$
It will be shown that
\begin{equation}\label{??}
\lim_{z\uparrow 0}zT(z)=0.
\end{equation}
In order to see this,  observe that
\begin{equation}\label{limzh}
 \lim_{z\uparrow 0} z \left( h(s,z) \frac{1}{s-z} \right) =0,\quad s\in\dR\setminus\{0\}.
\end{equation}
The argument will be finished via dominated convergence. For this purpose
write the function $T(z)$ as $T(z)=T_1(z)+T_2(z)$ with
$$
T_1(z)=\int_{(0,1/2)} h(s,z) \,\frac{d\sig(s)}{s-z},
\quad 
T_2(z)= \int_{\dR\setminus(-\gamma,1/2)} h(s,z) \,\frac{d\sig(s)}{s-z}.
$$
Note that for $s\in (0,1/2)$, $z\in(-1/2,0)$ the following
estimations hold:
\begin{equation}\label{est1}
 -1< \frac{z}{s-z}<0
\end{equation}
and
\begin{equation}\label{est2}
\frac{6}{5}< \frac{2-s}{s^2+1}<\frac{2+2sz}{s^2+1}<2.
\end{equation}
It follows from \eqref{est1} and \eqref{est2} that for $s \in (0,1/2)$ and $z \in (-1/2,0)$
one has
\begin{equation}\label{hschaetz}
 \frac{1}{5}<h(s,z)<2.
\end{equation}
The last estimate together with \eqref{limzh}  and \eqref{est1}
implies via dominated convergence that
\[
\lim_{z\uparrow 0}zT_1(z)=0.
\]
Note that  
for $s < -\gamma$ and $z\in (-\gamma/2,0)$ one has
\[
 |s-z| \ge |s|-\frac{\gamma}{2}, 
\]
and for $s \ge \frac{1}{2}$ and $z\in (-\gamma/2,0)$ one has
\[
 \quad |s-z| \ge s, 
\]
so that there are nonnegative constants $A,B$, such that
\[
 \left | \frac{1}{s-z} \right| \le \frac{A}{|s|}, \quad  \left | \frac{2+2sz}{s^2+1} \right| \le \frac{B}{|s|},
 \quad s\in\dR\setminus(-\gamma,1/2),\ z\in(-\gamma/2,0).
\]
Hence it follows that there exists $C\geq0$ such that 
\[
\left| h(s,z) \frac{1}{s-z} \right| \le \frac{C}{s^2},
\quad \quad s\in\dR\setminus(-\gamma,1/2),\ z\in(-\gamma/2,0).
\]
This estimate together with \eqref{limzh}   implies via dominated convergence that
\[
\lim_{z\uparrow 0}zT_2(z)=0.
\]
Consequently, \eqref{??} follows.

 Now consider the following three mutually exclusive cases, with regard to the 
 behavior of $\sigma$ in a neighborhood of zero. \\

\textit{Case (a)}. Assume that
\begin{equation}\label{1a}
 \sig(\{0\})>0.
\end{equation}
Then the identity \eqref{kuhstrich} together with \eqref{??} implies that there is some $\ep>0$
such that $R(z)$ is holomorphic with $R'(z)<0$ on $(-\ep,0)$. By Theorem \ref{mainth} the interval
$(-\ep,0)$ is contained in $\mathcal{F}_Q$.   Note that in this case it follows from \eqref{hash} that
\begin{equation}\label{1a'}
 M(0-)=+\infty.
\end{equation}

\textit{Case (b)}. Assume that
\begin{equation}\label{1b}
 \sig(\{0\})=0 \quad \mbox{and} \quad \int_{(0,1/2)} \frac{d\sig (s)}{s}=+\infty.
\end{equation}
By the estimate  \eqref{hschaetz} and monotone convergence it follows that
$$
\int_{(0,1/2)} h(s,z) \,\frac{d\sig(s)}{s-z} \ge \frac{1}{5} \int_{(0,1/2)}\frac{d\sig(s)}{s-z}\to +\infty,
 \quad z\uparrow 0.
$$
Consequently, $T_1(0-)=+\infty$. Since $T_2(z)$ is holomorphic on the interval $(-\gamma,1/2)$,
one obtains
$T(0-)=+\infty$. As $T$ is holomorphic on $(-\gamma,0)$, the identity \eqref{kuhstrich} implies
again that $R'(z)<0$ on some interval $(-\ep,0)$, which is therefore contained in $\mathcal{F}_Q$.
Note that in this case it follows from \eqref{ooo} that
\begin{equation}\label{1b'}
M(0-)=+\infty.
\end{equation}

\textit{Case (c)}. Assume that
\begin{equation}\label{1c}
\sig(\{0\})=0 \quad \mbox{and} \quad \int_{(0,1/2)} \frac{d\sig (s)}{s} < + \infty.
\end{equation}
 Then
\begin{equation}\label{altkuhstrich}
R'(z)=2z\left(a+\int_{\dR\setminus \{0\}} \frac{d\sig(s)}{s(s^2+1)}\right) + z^2S(z),
\end{equation}
with
\begin{equation}\label{bbb}
S(z)=3b+ \int_{\dR\setminus \{0\}}  \frac{3s-2z}{s(s-z)^2}  \, d\sig(s).
\end{equation}
It will be shown by dominated convergence that
\begin{equation}\label{limzS}
\lim_{z \uparrow 0} zS(z)=0.
\end{equation}
To do this, it suffices to find an integrable upper bound when the integrand in \eqref{bbb}
is multiplied by $z$.
For $s>0$ and $z<0$ one has $0< 3s-2z < 3(s-z)$, so that
\[
 0 \le \frac{3s-2z}{s(s-z)^2} \le  \frac{3}{s(s-z)}.
\]
Since  $|z| < s-z$ for $s >0$ and $z<0$, it follows that
\begin{equation}\label{hen1}
   \left| z \frac{3s-2z}{s(s-z)^2} \right| \le \frac{3}{s}, \quad s>0, \quad z<0,
\end{equation}
and in particular  this estimate holds for $0<s<1/2$ and $z <0$.
Now let $s \ge 1/2$
and assume that $-\gamma/2 < z<0$. Then $|s-z|\ge |s|$ and thus
\begin{equation}\label{hen5}
  \left| \frac{3s-2z}{s(s-z)^2} \right| \le 
  \frac{3|s|+|\gamma|}{|s|^3}.
\end{equation}
Next let $s \le -\gamma$ and assume that $-\gamma/2 < z<0$. Then
\[
 |s-z| \ge |s|-|\gamma|/2,
\]
and thus
\begin{equation}\label{hen4}
  \left| \frac{3s-2z}{s(s-z)^2} \right| \le 
  \frac{3|s|+|\gamma|}{|s|(|s|-|\gamma|/2)^2}.
\end{equation}
Hence, combining \eqref{hen5} and \eqref{hen4}, there exists $A \ge 0$ such that
\begin{equation}\label{hen2}
 \left| z \frac{3s-2z}{s(s-z)^2} \right| \le  \frac{A}{|s|^2}, \quad -\gamma/2 < z<0, \quad
 s \ge 1/2 \,\,\, \mbox{or} \,\,\, s \le -\gamma.
\end{equation}
The estimates \eqref{hen1} and \eqref{hen2}, together with the integrability condition \eqref{int'} and
the integrability assumption in \eqref{1c}, show that \eqref{limzS} is satisfied.
In a similar way, it can be shown that
\begin{equation}\label{r1lim}
\lim_{z\uparrow 0} S(z)=3b+3\int_{\dR\setminus \{0\}}\frac{d\sig(s)}{s^2}\in[0,+\infty].
\end{equation}
In fact, this can be seen via dominated convergence on the negative axis (use the estimate
\eqref{hen4}) and by monotone convergence on the positive axis. 

Observe that in the present case it follows from \eqref{ooo} that $M(0-) \in \dR$.
There are three situations to consider:

\begin{itemize}

\item
$M(0-)>0$. Then the identities \eqref{altkuhstrich} and \eqref{limzS} imply that
$R'(z)<0$ on  some interval $(-\ep,0)$, which is therefore contained in $\mathcal{F}_Q$.

\item
$M(0-)< 0$. Similarly, one obtains $R'(z)>0$ on  some interval $(-\ep,0)$, which is
therefore not contained in $\mathcal{F}_Q$.

\item
$M(0-)=0$.
It follows from \eqref{ooo} and \eqref{altkuhstrich}
that $R'(z)=z^2S(z)$.  The identity \eqref{r1lim} implies that $\lim_{z\uparrow 0} S(z)\in(0,+\infty]$
(otherwise $b$ and $\sigma$ and by \eqref{ooo} also $a$ would vanish, so that $M(z) \equiv 0$,
contradicting the fact that $R(z)\in\mathbf{N}_1$).
Hence, $R'(z)>0$ on some interval $(-\ep,0)$, which is therefore not contained in $\mathcal{F}_Q$.  \\
\end{itemize}

\textit{Step 2.}  In order to treat the general case it will now be proved that
\begin{equation}\label{rprim}
 \lim_{z \uparrow 0} \frac{R'(z)}{R(z)} = -\infty,
\end{equation}
in each of the cases (a), (b), and (c) in Step 1. \\

\textit{Case (a)}.
Recall that  $R'(z) <0$ on some interval $(-\varepsilon,0)$ with $\lim_{z \uparrow 0} R'(z)=-\sig(\{0\})$ according to \eqref{kuhstrich}.
Since $R(z) >0$ on some interval $(-\varepsilon_1,0)$ by \eqref{1a'}  with
$\lim_{z \uparrow 0} R(z)=0$ (see \eqref{hash}), it
follows that \eqref{rprim} holds. \\

\textit{Case (b)}.
Recall that $T(0-) =+ \infty$, so that
\eqref{kuhstrich} implies that
\[
\lim_{z \uparrow 0}\frac{R'(z)}{z} \to +\infty.
\]
Furthermore
\[
\lim_{z \uparrow 0} \frac{R(z)}{z} = \lim_{z \uparrow 0} zM(z) =0,
\]
and note that $z M(z) \uparrow 0$ for $z \uparrow 0$ (see \eqref{1b'}).
Thus \eqref{rprim} follows. \\

\textit{Case (c)}.
It follows from \eqref{ooo}, \eqref{altkuhstrich}, and
\eqref{limzS} that
\[
\lim_{z \uparrow 0} \frac{R'(z)}{z}=2M(0-).
\]

If $M(0-) \neq 0$, then
\begin{equation}
 \lim_{z \uparrow 0} \frac{z R'(z)}{R(z)} =
  \lim_{z \uparrow 0} \frac{R'(z)}{z} \frac{1}{M(z)} =2,
\end{equation}
from which \eqref{rprim} follows.

If $M(0-)=0$, then $M(z) \uparrow0$ as $z  \uparrow 0$, so that
\begin{equation}
 \lim_{z \uparrow 0} \frac{R'(z)}{R(z)} =  \lim_{z \uparrow 0} \frac{S(z)}{M(z)}=-\infty,
\end{equation}
and, again, \eqref{rprim} follows. \\

\textit{Step 3.}  Consider the function $Q(z)$ defined by
\[
 Q(z)=\frac{z^2M(z)}{(z-\beta)(z-\bar{\beta})}=\frac{R(z)}{D(z)},
\]
where $R(z)=z^2M(z)$ as in Step 1 and $D(z)=(z-\beta)(z-\bar{\beta})$. Then
\begin{equation}\label{deriv}
 Q'(z)=\frac{R'(z)D(z)-R(z)D'(z)}{D(z)^2}=M(z)\,\frac{z^2}{D(z)} \left( \frac{R'(z)}{R(z)}-\frac{D'(z)}{D(z)} \right).
\end{equation}
Note that $D(z) >0$ for $z \in \dR$ and $z \neq \beta$, and that
\[
 \frac{D'(0)}{D(0)}=- \frac{\RE \beta}{|\beta|^2}.
\]
The identity \eqref{deriv}  and the limit in \eqref{rprim} now imply that
\[
Q'(z) <0,\,\, z \in (-\varepsilon,0), \mbox{ for some $\varepsilon > 0$}
\quad \Leftrightarrow \quad 0< M(0-) \le \infty,
\]
and
\[
Q'(z) \ge 0, \,\, z \in (-\varepsilon,0), \mbox{ for some $\varepsilon > 0$} \quad \Leftrightarrow \quad M(0-) \le  0.
\]
This completes the proof of the theorem.
\end{proof}

\begin{example}
If $Q(z)=z^{2+\rho}$, $0 < \rho<1$, then  $Q(z)=z^2M(z)$ with $M(z)=z^\rho \in \mathbf{N}_0$.  Hence $Q(z)
\in \mathbf{N}_1$ and $z=0$  
is the GZNT of $Q(z)$.  With the interpretation of $z^\rho$
as in the introduction, it follows that $M(z)$ is holomorphic on
$(0,+\infty)$. It is not
difficult to see that $\lim_{z \downarrow 0} M(z)=0$. Hence, the conditions of Theorem \ref{realpath} are not
satisfied. The path is indicated in Figure 2.
\end{example}

\begin{example}
If $Q(z)=z^2 \log z$, then $Q(z)=z^2 M(z)$ with $M(z) =\log z$. Hence $Q(z) \in \mathbf{N}_1$ and $z=0$ is the GZNT of $Q(z)$. Clearly $\lim_{z \downarrow 0} \log z=-\infty$, so that by Theorem
\ref{realpath}, there exists $\gamma >0$ such that $(0, \gamma) \subset \mathcal{F}_Q$. It can be shown that
$\gamma=1/\sqrt{e}$ is the maximal possible value.
\end{example}

In the special case that $Q(z) \in \mathbf{N}_1$ is holomorphic in a neighborhood
of the GZNT $\alpha \in \dR$,  a combination of Theorem \ref{realpath} and
Proposition \ref{zeros>0} leads to the following description, which
agrees with Theorem \ref{mainth}.

\begin{corollary}\label{zeros>0+}
Let $Q(z) \in \mathbf{N}_1$ be holomorphic in a neighborhood
of the GZNT $\alpha \in \dR$.  Then the following cases appear:
\begin{itemize}
\item[(1)]   $Q'(\alpha) < 0$: There exists $\varepsilon>0$ such that $(\alpha-\ep, \alpha+\ep)\subset \mathcal{ F}_Q$.

\item[(2a)]  $Q'(\alpha) = 0$, $Q''(\alpha) > 0$: There exists $\varepsilon>0$ such that $(\alpha-\ep,\alpha]\subset \mathcal{ F}_Q$.

 \item[(2b)] $Q'(\alpha) = 0$, $Q''(\alpha) < 0$: There exists $\varepsilon>0$ such that $[\alpha,\alpha+\ep ) \subset \mathcal{ F}_Q$.

\item[(3)]  $Q'(\alpha) = 0$, $Q''(\alpha) = 0$: There exists no $\varepsilon>0$ such that $(\alpha-\ep,\alpha]\subset \mathcal{ F}_Q$ or $[\alpha,\alpha+\ep ) \subset \mathcal{ F}_Q$.
\end{itemize}
\end{corollary}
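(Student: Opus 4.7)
The plan is to deduce this corollary from Theorem \ref{realpath} by combining it with the Laurent expansion of $M(z)$ at $\alpha$ developed in Case 2 of the proof of Proposition \ref{zeros>0}. Since $\alpha$ is a GZNT of $Q(z)$ and $Q(z)$ is holomorphic at $\alpha\in\dR$, that argument forces $R(\alpha)=0$ in the factorization $Q(z)=R(z)M(z)$ from Theorem \ref{factor}, so
\[
 Q(z)=(z-\alpha)^2 M(z) \quad \text{or} \quad Q(z)=\frac{(z-\alpha)^2}{(z-\beta)(z-\overline{\beta})}\,M(z),
\]
with $M\in\mathbf{N}_0$ admitting, on a punctured neighborhood of $\alpha$, the Laurent expansion
\[
 M(z)=\frac{m_{-1}}{z-\alpha}+m_0+m_1(z-\alpha)+\cdots, \qquad m_{-1}\le 0.
\]
In particular $M$ is holomorphic on $(\alpha-\gamma,\alpha)\cup(\alpha,\alpha+\gamma)$ for some $\gamma>0$, which is exactly the setting needed to invoke Theorem \ref{realpath}.

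The second step reads off the one-sided limits from the expansion: if $m_{-1}<0$ then $M(\alpha-)=+\infty$ and $M(\alpha+)=-\infty$, whereas if $m_{-1}=0$ then $M(\alpha\pm)=m_0$. Identities \eqref{id1} and \eqref{id2} (with $c_0>0$) translate the hypotheses on $Q'(\alpha)$ and $Q''(\alpha)$ directly into conditions on $m_{-1}$ and $m_0$. In case (1), $Q'(\alpha)<0$ gives $m_{-1}<0$, so both criteria of Theorem \ref{realpath} are met; this yields $(\alpha-\ep_1,\alpha]\subset\mathcal F_Q$ and $[\alpha,\alpha+\ep_2)\subset\mathcal F_Q$, and taking $\ep=\min(\ep_1,\ep_2)$ gives the two-sided open interval. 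In case (2a), $m_{-1}=0$ and $m_0>0$, so $M(\alpha-)=m_0\in(0,\infty]$ triggers Theorem \ref{realpath}(i) and produces $(\alpha-\ep,\alpha]\subset\mathcal F_Q$; case (2b) is symmetric via Theorem \ref{realpath}(ii). In case (3), $m_{-1}=0$ and $m_0=0$, so $M(\alpha\pm)=0$ lies in neither $(0,\infty]$ nor $[-\infty,0)$; the converse direction of Theorem \ref{realpath} then rules out the existence of any $\ep>0$ with $(\alpha-\ep,\alpha]\subset\mathcal F_Q$ or $[\alpha,\alpha+\ep)\subset\mathcal F_Q$.

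The principal obstacle is not computational but organisational: one must match the four admissible configurations of $(m_{-1},m_0)$ against the two directional criteria of Theorem \ref{realpath}, and crucially invoke the \emph{only if} direction of that theorem in case (3), whereas the affirmative cases (1), (2a), (2b) use the direct implication. All the analytic work—the Laurent expansion with $m_{-1}\le 0$ and the identities linking $m_{-1},m_0$ to $Q'(\alpha),Q''(\alpha)$—has already been carried out in the proof of Proposition \ref{zeros>0}, so no new computation is required.
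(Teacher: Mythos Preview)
Your proof is correct and follows essentially the same approach as the paper: both deduce the corollary from Theorem \ref{realpath} by first writing $Q(z)=(z-\alpha)^2 M(z)$ (possibly with the denominator $(z-\beta)(z-\bar\beta)$), then translating the sign information on $Q'(\alpha)$ and $Q''(\alpha)$ into the behaviour of $M$ near $\alpha$ (a simple pole with negative residue, or holomorphic with $M(\alpha)>0$, $<0$, or $=0$), and finally invoking the two directions of Theorem \ref{realpath}. The only cosmetic difference is that you phrase everything through the coefficients $m_{-1},m_0$ and the identities \eqref{id1}--\eqref{id2}, whereas the paper recomputes $Q'(\alpha)=c$ and $Q''(\alpha)=2M(\alpha)$ directly.
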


In order to show how this follows from Theorem \ref{realpath}
assume for simplicity that $Q(z)$ is of the form $Q(z)=(z-\alpha)^2M(z)$ with $M(z) \in \mathbf{N}_0$.
Then the condition that $Q(z)$ is holomorphic around $\alpha$ implies that
$M(z)$ has an isolated first order pole at $\alpha$ or that $M(z)$ is holomorphic
around $\alpha$.

If $M(z)$ has a first order pole at $\alpha$, then
\begin{equation}\label{pol}
 M(z)=\frac{c}{z-\alpha} + \varphi(z),
\end{equation}
with $c<0$ and $\varphi(z)$ holomorphic around $\alpha$. Hence, in this case
\[
 Q'(\alpha) =c <0.
\]

If $M(z)$ is holomorphic at $\alpha$, then
\[
Q'(\alpha)=0, \quad Q''(\alpha)=2 M(\alpha), \quad Q'''(\alpha)=6M'(\alpha).
\]

If Case (1) prevails, then $Q'(\alpha)<0$. Hence the function $M(z)$ must be of the form \eqref{pol}.
(It also follows from \eqref{pol} and Theorem \ref{realpath} that the assertion in (1) holds).

If Cases (2a), (2b), or (3) prevail, then $Q'(\alpha)=0$. Hence the function $M(z)$ must be holomorphic at $\alpha$.
In particular in Case (2a) $M(\alpha)>0$ and in Case (2b) $M(\alpha)<0$, so that assertions follows from Theorem \ref{realpath}. Finally, in Case (3) $M(\alpha)=0$ and the assertion in (3) follows from Theorem \ref{realpath}.

\begin{remark}\label{53}
It may be helpful to reconsider some of the  concrete simple examples given earlier
in terms of Theorem \ref{realpath} and Corollary \ref{zeros>0}.

If $Q(z)=-z$, then $Q'(0)=-1$.  Since $Q(z)=z^2M(z)$ with $M(z)=-1/z \in \mathbf{N}_0$,
one has  $M(0+) =-\infty$, $M(0-)=\infty$.
 
If $Q(z)=z^2$, then $Q'(0)=0$, $Q''(0)=2$. Since
$Q(z)=z^2M(z)$ with $M(z)=1 \in \mathbf{N}_0$, one has  $M(0)=1$.
 
If $Q(z)=z^3$, then  $Q''(0)=0$, and $Q'''(0)=6$. Since
$Q(z)=z^2M(z)$ with $M(z)=z \in \mathbf{N}_0$, one has $M(0)=0$
 
If $Q(z)$ is as in Example \ref{drie}, then
$Q'(0)=0$, $Q''(0)=2(\delta_2-\delta_1)$, and $Q'''(0)=2(\delta_1+\delta_2)$, and one has
$M(0)= \delta_2-\delta_1$.
\end{remark}

\section{The path associated with a special function}\label{twe}

Each of the factors
in \eqref{einz} belongs to $\mathbf{N}_1$ and has 
$\alpha \in \dC^+ \cup \dR \cup{\infty}$ as GZNT and $\beta
\in \dC^+ \cup \dR \cup{\infty}$ as GPNT. 
In this section the path $\alpha(\tau)$ of the GZNT for this factor
will be described.  The extreme case
with $\beta=\infty$ can be treated as in the introduction (after a shift). The other extreme case with
$\alpha=\infty$ can be treated similarly.
Therefore it suffices to consider the function $Q(z)$ defined by
\begin{equation}
\label{einz-}
 Q(z)=\frac{(z-\alpha)(z-\overline{\alpha})}{(z-\beta)(z-\overline{\beta })},
 \quad \alpha, \beta \in \dC^+ \cup \dR.
\end{equation}

Observe that the equation $\IM Q(z)=0$ where $z=x+ \ii y$, is equivalent to either the equation
\begin{equation}\label{EINS}
y=0,
\end{equation}
or the equation
\begin{equation}\label{ZWEI}
 (\RE \alpha-\RE \beta) (x^2+y^2)+(|\beta|^2-|\alpha|^2)x+|\alpha|^2 \RE \beta-|\beta|^2 \RE \alpha=0.
\end{equation}
If $\RE \alpha \ne \RE \beta$, then the equation \eqref{ZWEI}  is equivalent to 
\[
\left( x-\frac{|\be|^2-|\al|^2}{2\Re(\beta-\alpha)}\right)^2+y^2= \left(
\Re\alpha-\frac{|\beta|^2-|\alpha|^2}{2\Re(\beta-\alpha)}\right)^2 +\left(\Im\alpha\right)^2.
\]
This defines a circle $C$ which can be also described by the condition that it contains $\al$ and $\be$ and
that its center lies on $\dR$. If $\RE \alpha = \RE \beta$, then the equation \eqref{ZWEI}  is equivalent to
\[
x=\RE \alpha,
\]
which defines a vertical line $C$ through $x =\RE \alpha$.
Note that by Theorem \ref{charact} and Corollary \ref{quh}:
\[
\mathcal{ F}_Q \subset (C\cap \dC^+)\cup\dR\cup\{\infty\}.
\]
It is straightforward to check that the sign of $Q'(x)$, $x \in \dR$ (with the possible exception of $\beta$
when $\beta \in \dR$), is given by the sign of the polynomial
\begin{equation}\label{DREI}
  (\RE \alpha-\RE \beta) x^2+(|\beta|^2-|\alpha|^2)x+|\alpha|^2 \RE \beta-|\beta|^2 \RE \alpha.
\end{equation}

Denote for $\RE \alpha \neq \RE \beta$ the intersections of $C$ with $\dR$ by $P_l$ and $P_r$ in such way that $P_l <P_r$. Denote for $\RE \alpha=\RE \beta$ the intersection of $C$ with $\dR$ by $P$.
There are now three cases to consider.

\textit{Case 1}. $\RE \alpha > \RE \beta$. Then it follows from \eqref{ZWEI} and \eqref{DREI} that
$Q'(x)<0$ for $x\in(P_l,P_r)$ and $Q'(x)>0$ for $x\in\dR\setminus(P_l,P_r)$.
Hence,
 \[
 \mathcal{F}_Q=(C\cap \dC^+)\cup[P_l,P_r].
\]
The direction of the path is indicated in Figure 8.

\begin{figure}[hbt]\label{circle1}
\begin{center}
\begin{picture}(95,32)(0,13)
\linethickness{0.05mm} \put(0,20){\line(1,0){90}} \thicklines
 \put(65,20){\vector(-1,0){10}}
 \put(55,20){\line(-1,0){30}}
 \multiput(64.99,20.5)(0.01,-0.5){1}{\line(0,-1){0.5}}
\multiput(64.98,21)(0.02,-0.5){1}{\line(0,-1){0.5}}
 \multiput(64.94,21.49)(0.03,-0.5){1}{\line(0,-1){0.5}}
\multiput(64.9,21.99)(0.04,-0.5){1}{\line(0,-1){0.5}}
 \multiput(64.84,22.49)(0.06,-0.5){1}{\line(0,-1){0.5}}
\multiput(64.78,22.98)(0.07,-0.49){1}{\line(0,-1){0.49}}
\multiput(64.7,23.47)(0.08,-0.49){1}{\line(0,-1){0.49}}
\multiput(64.6,23.96)(0.09,-0.49){1}{\line(0,-1){0.49}}
\multiput(64.5,24.45)(0.1,-0.49){1}{\line(0,-1){0.49}}
\multiput(64.38,24.94)(0.12,-0.48){1}{\line(0,-1){0.48}}
\multiput(64.25,25.42)(0.13,-0.48){1}{\line(0,-1){0.48}}
\multiput(64.11,25.9)(0.14,-0.48){1}{\line(0,-1){0.48}}
\multiput(63.96,26.37)(0.15,-0.47){1}{\line(0,-1){0.47}}
\multiput(63.79,26.84)(0.16,-0.47){1}{\line(0,-1){0.47}}
\multiput(63.62,27.31)(0.18,-0.47){1}{\line(0,-1){0.47}}
\multiput(63.43,27.77)(0.09,-0.23){2}{\line(0,-1){0.23}}
\multiput(63.23,28.23)(0.1,-0.23){2}{\line(0,-1){0.23}}
\multiput(63.02,28.68)(0.11,-0.23){2}{\line(0,-1){0.23}}
\multiput(62.8,29.12)(0.11,-0.22){2}{\line(0,-1){0.22}}
\multiput(62.56,29.57)(0.12,-0.22){2}{\line(0,-1){0.22}}
\multiput(62.32,30)(0.12,-0.22){2}{\line(0,-1){0.22}}
\multiput(62.07,30.43)(0.13,-0.21){2}{\line(0,-1){0.21}}
\multiput(61.8,30.85)(0.13,-0.21){2}{\line(0,-1){0.21}}
\multiput(61.52,31.27)(0.14,-0.21){2}{\line(0,-1){0.21}}
\multiput(61.24,31.67)(0.14,-0.2){2}{\line(0,-1){0.2}}
 \multiput(60.94,32.08)(0.15,-0.2){2}{\line(0,-1){0.2}}
\multiput(60.64,32.47)(0.1,-0.13){3}{\line(0,-1){0.13}}
\multiput(60.32,32.86)(0.11,-0.13){3}{\line(0,-1){0.13}}
\multiput(60,33.23)(0.11,-0.13){3}{\line(0,-1){0.13}}
 \multiput(59.66,33.6)(0.11,-0.12){3}{\line(0,-1){0.12}}
\multiput(59.32,33.96)(0.11,-0.12){3}{\line(0,-1){0.12}}
\multiput(58.96,34.32)(0.12,-0.12){3}{\line(1,0){0.12}}
\multiput(58.6,34.66)(0.12,-0.11){3}{\line(1,0){0.12}}
 \multiput(58.23,35)(0.12,-0.11){3}{\line(1,0){0.12}}
\multiput(57.86,35.32)(0.13,-0.11){3}{\line(1,0){0.13}}
\multiput(57.47,35.64)(0.13,-0.11){3}{\line(1,0){0.13}}
\multiput(57.08,35.94)(0.13,-0.1){3}{\line(1,0){0.13}}
 \multiput(56.67,36.24)(0.2,-0.15){2}{\line(1,0){0.2}}
\multiput(56.27,36.52)(0.2,-0.14){2}{\line(1,0){0.2}}
 \multiput(55.85,36.8)(0.21,-0.14){2}{\line(1,0){0.21}}
\multiput(55.43,37.07)(0.21,-0.13){2}{\line(1,0){0.21}}
 \multiput(55,37.32)(0.21,-0.13){2}{\line(1,0){0.21}}
\multiput(54.57,37.56)(0.22,-0.12){2}{\line(1,0){0.22}}
\multiput(54.12,37.8)(0.22,-0.12){2}{\line(1,0){0.22}}
\multiput(53.68,38.02)(0.22,-0.11){2}{\line(1,0){0.22}}
\multiput(53.23,38.23)(0.23,-0.11){2}{\line(1,0){0.23}}
\multiput(52.77,38.43)(0.23,-0.1){2}{\line(1,0){0.23}}
\multiput(52.31,38.62)(0.23,-0.09){2}{\line(1,0){0.23}}
\multiput(51.84,38.79)(0.47,-0.18){1}{\line(1,0){0.47}}
\multiput(51.37,38.96)(0.47,-0.16){1}{\line(1,0){0.47}}
\multiput(50.9,39.11)(0.47,-0.15){1}{\line(1,0){0.47}}
\multiput(50.42,39.25)(0.48,-0.14){1}{\line(1,0){0.48}}
\multiput(49.94,39.38)(0.48,-0.13){1}{\line(1,0){0.48}}
\multiput(49.45,39.5)(0.48,-0.12){1}{\line(1,0){0.48}}
 \multiput(48.96,39.6)(0.49,-0.1){1}{\line(1,0){0.49}}
\multiput(48.47,39.7)(0.49,-0.09){1}{\line(1,0){0.49}}
\multiput(47.98,39.78)(0.49,-0.08){1}{\line(1,0){0.49}}
\multiput(47.49,39.84)(0.49,-0.07){1}{\line(1,0){0.49}}
 \multiput(46.99,39.9)(0.5,-0.06){1}{\line(1,0){0.5}}
\multiput(46.49,39.94)(0.5,-0.04){1}{\line(1,0){0.5}}
 \multiput(46,39.98)(0.5,-0.03){1}{\line(1,0){0.5}}
\multiput(45.5,39.99)(0.5,-0.02){1}{\vector(1,0){0.5}}
 \multiput(45,40)(0.5,-0.01){1}{\line(1,0){0.5}}
\multiput(44.5,39.99)(0.5,0.01){1}{\line(1,0){0.5}}
 \multiput(44,39.98)(0.5,0.02){1}{\line(1,0){0.5}}
\multiput(43.51,39.94)(0.5,0.03){1}{\line(1,0){0.5}}
 \multiput(43.01,39.9)(0.5,0.04){1}{\line(1,0){0.5}}
\multiput(42.51,39.84)(0.5,0.06){1}{\line(1,0){0.5}}
 \multiput(42.02,39.78)(0.49,0.07){1}{\line(1,0){0.49}}
\multiput(41.53,39.7)(0.49,0.08){1}{\line(1,0){0.49}}
 \multiput(41.04,39.6)(0.49,0.09){1}{\line(1,0){0.49}}
\multiput(40.55,39.5)(0.49,0.1){1}{\line(1,0){0.49}}
 \multiput(40.06,39.38)(0.48,0.12){1}{\line(1,0){0.48}}
\multiput(39.58,39.25)(0.48,0.13){1}{\line(1,0){0.48}}
 \multiput(39.1,39.11)(0.48,0.14){1}{\line(1,0){0.48}}
\multiput(38.63,38.96)(0.47,0.15){1}{\line(1,0){0.47}}
 \multiput(38.16,38.79)(0.47,0.16){1}{\line(1,0){0.47}}
\multiput(37.69,38.62)(0.47,0.18){1}{\line(1,0){0.47}}
 \multiput(37.23,38.43)(0.23,0.09){2}{\line(1,0){0.23}}
\multiput(36.77,38.23)(0.23,0.1){2}{\line(1,0){0.23}}
 \multiput(36.32,38.02)(0.23,0.11){2}{\line(1,0){0.23}}
\multiput(35.88,37.8)(0.22,0.11){2}{\line(1,0){0.22}}
 \multiput(35.43,37.56)(0.22,0.12){2}{\line(1,0){0.22}}
\multiput(35,37.32)(0.22,0.12){2}{\line(1,0){0.22}}
 \multiput(34.57,37.07)(0.21,0.13){2}{\line(1,0){0.21}}
\multiput(34.15,36.8)(0.21,0.13){2}{\line(1,0){0.21}}
 \multiput(33.73,36.52)(0.21,0.14){2}{\line(1,0){0.21}}
\multiput(33.33,36.24)(0.2,0.14){2}{\line(1,0){0.2}}
 \multiput(32.92,35.94)(0.2,0.15){2}{\line(1,0){0.2}}
\multiput(32.53,35.64)(0.13,0.1){3}{\line(1,0){0.13}}
 \multiput(32.14,35.32)(0.13,0.11){3}{\line(1,0){0.13}}
\multiput(31.77,35)(0.13,0.11){3}{\line(1,0){0.13}}
 \multiput(31.4,34.66)(0.12,0.11){3}{\line(1,0){0.12}}
\multiput(31.04,34.32)(0.12,0.11){3}{\line(1,0){0.12}}
 \multiput(30.68,33.96)(0.12,0.12){3}{\line(1,0){0.12}}
  \multiput(30.34,33.6)(0.11,0.12){3}{\line(0,1){0.12}}
 \multiput(30,33.23)(0.11,0.12){3}{\line(0,1){0.12}}
\multiput(29.68,32.86)(0.11,0.13){3}{\line(0,1){0.13}}
 \multiput(29.36,32.47)(0.11,0.13){3}{\line(0,1){0.13}}
\multiput(29.06,32.08)(0.1,0.13){3}{\line(0,1){0.13}}
 \multiput(28.76,31.67)(0.15,0.2){2}{\line(0,1){0.2}}
\multiput(28.48,31.27)(0.14,0.2){2}{\line(0,1){0.2}}
 \multiput(28.2,30.85)(0.14,0.21){2}{\line(0,1){0.21}}
\multiput(27.93,30.43)(0.13,0.21){2}{\line(0,1){0.21}}
 \multiput(27.68,30)(0.13,0.21){2}{\line(0,1){0.21}}
\multiput(27.44,29.57)(0.12,0.22){2}{\line(0,1){0.22}}
 \multiput(27.2,29.12)(0.12,0.22){2}{\line(0,1){0.22}}
\multiput(26.98,28.68)(0.11,0.22){2}{\line(0,1){0.22}}
 \multiput(26.77,28.23)(0.11,0.23){2}{\line(0,1){0.23}}
\multiput(26.57,27.77)(0.1,0.23){2}{\line(0,1){0.23}}
 \multiput(26.38,27.31)(0.09,0.23){2}{\line(0,1){0.23}}
\multiput(26.21,26.84)(0.18,0.47){1}{\line(0,1){0.47}}
 \multiput(26.04,26.37)(0.16,0.47){1}{\line(0,1){0.47}}
\multiput(25.89,25.9)(0.15,0.47){1}{\line(0,1){0.47}}
 \multiput(25.75,25.42)(0.14,0.48){1}{\line(0,1){0.48}}
\multiput(25.62,24.94)(0.13,0.48){1}{\line(0,1){0.48}}
 \multiput(25.5,24.45)(0.12,0.48){1}{\line(0,1){0.48}}
\multiput(25.4,23.96)(0.1,0.49){1}{\line(0,1){0.49}}
 \multiput(25.3,23.47)(0.09,0.49){1}{\line(0,1){0.49}}
\multiput(25.22,22.98)(0.08,0.49){1}{\line(0,1){0.49}}
 \multiput(25.16,22.49)(0.07,0.49){1}{\line(0,1){0.49}}
\multiput(25.1,21.99)(0.06,0.5){1}{\line(0,1){0.5}}
 \multiput(25.06,21.49)(0.04,0.5){1}{\line(0,1){0.5}}
\multiput(25.02,21)(0.03,0.5){1}{\line(0,1){0.5}}
 \multiput(25.01,20.5)(0.02,0.5){1}{\line(0,1){0.5}}
\multiput(25,20)(0.01,0.5){1}{\line(0,1){0.5}}
 \thinlines \put(35,36){\line(0,1){2}}
\put(61,31){\line(0,1){2}}
 \put(36,35){$\beta$}
  \put(58,30){$\alpha$}
\put(25,19){\line(0,1){2}}
 \put(65,19){\line(0,1){2}}
 \put(25,17){\makebox(0,0)[cc]{$P_l$}}
\put(65,17){\makebox(0,0)[cc]{$P_r$}}

\end{picture}
\caption{The path of $\alpha(\tau)$ for $\RE \alpha > \RE \beta$.}
\end{center}
\end{figure}

\textit{Case 2}. $\RE \alpha < \RE \beta$. Then it follows from \eqref{ZWEI} and \eqref{DREI} that
$Q'(x)>0$ for $x\in(P_l,P_r)$ and $Q'(x)<0$ for $x\in\dR\setminus(P_l,P_r)$.
Hence,
 \[
 \mathcal{F}_Q=(C\cap \dC^+)\cup(-\infty, P_l] \cup [P_r, \infty).
\]
The direction of the path is indicated in Figure 9.

\begin{figure}[hbt]\label{circle2}
\begin{center}
\begin{picture}(90,32)(0,13)
\linethickness{0.05mm} \put(0,20){\line(1,0){90}} \thicklines
 \put(25,20){\vector(-1,0){10}}
 \put(15,20){\line(-1,0){15}}
 \put(90,20){\vector(-1,0){10}}
 \put(80,20){\line(-1,0){15}}
 \multiput(64.99,20.5)(0.01,-0.5){1}{\line(0,-1){0.5}}
\multiput(64.98,21)(0.02,-0.5){1}{\line(0,-1){0.5}}
 \multiput(64.94,21.49)(0.03,-0.5){1}{\line(0,-1){0.5}}
\multiput(64.9,21.99)(0.04,-0.5){1}{\line(0,-1){0.5}}
 \multiput(64.84,22.49)(0.06,-0.5){1}{\line(0,-1){0.5}}
\multiput(64.78,22.98)(0.07,-0.49){1}{\line(0,-1){0.49}}
\multiput(64.7,23.47)(0.08,-0.49){1}{\line(0,-1){0.49}}
\multiput(64.6,23.96)(0.09,-0.49){1}{\line(0,-1){0.49}}
\multiput(64.5,24.45)(0.1,-0.49){1}{\line(0,-1){0.49}}
\multiput(64.38,24.94)(0.12,-0.48){1}{\line(0,-1){0.48}}
\multiput(64.25,25.42)(0.13,-0.48){1}{\line(0,-1){0.48}}
\multiput(64.11,25.9)(0.14,-0.48){1}{\line(0,-1){0.48}}
\multiput(63.96,26.37)(0.15,-0.47){1}{\line(0,-1){0.47}}
\multiput(63.79,26.84)(0.16,-0.47){1}{\line(0,-1){0.47}}
\multiput(63.62,27.31)(0.18,-0.47){1}{\line(0,-1){0.47}}
\multiput(63.43,27.77)(0.09,-0.23){2}{\line(0,-1){0.23}}
\multiput(63.23,28.23)(0.1,-0.23){2}{\line(0,-1){0.23}}
\multiput(63.02,28.68)(0.11,-0.23){2}{\line(0,-1){0.23}}
\multiput(62.8,29.12)(0.11,-0.22){2}{\line(0,-1){0.22}}
\multiput(62.56,29.57)(0.12,-0.22){2}{\line(0,-1){0.22}}
\multiput(62.32,30)(0.12,-0.22){2}{\line(0,-1){0.22}}
\multiput(62.07,30.43)(0.13,-0.21){2}{\line(0,-1){0.21}}
\multiput(61.8,30.85)(0.13,-0.21){2}{\line(0,-1){0.21}}
\multiput(61.52,31.27)(0.14,-0.21){2}{\line(0,-1){0.21}}
\multiput(61.24,31.67)(0.14,-0.2){2}{\line(0,-1){0.2}}
 \multiput(60.94,32.08)(0.15,-0.2){2}{\line(0,-1){0.2}}
\multiput(60.64,32.47)(0.1,-0.13){3}{\line(0,-1){0.13}}
\multiput(60.32,32.86)(0.11,-0.13){3}{\line(0,-1){0.13}}
\multiput(60,33.23)(0.11,-0.13){3}{\line(0,-1){0.13}}
 \multiput(59.66,33.6)(0.11,-0.12){3}{\line(0,-1){0.12}}
\multiput(59.32,33.96)(0.11,-0.12){3}{\line(0,-1){0.12}}
\multiput(58.96,34.32)(0.12,-0.12){3}{\line(1,0){0.12}}
\multiput(58.6,34.66)(0.12,-0.11){3}{\line(1,0){0.12}}
 \multiput(58.23,35)(0.12,-0.11){3}{\line(1,0){0.12}}
\multiput(57.86,35.32)(0.13,-0.11){3}{\line(1,0){0.13}}
\multiput(57.47,35.64)(0.13,-0.11){3}{\line(1,0){0.13}}
\multiput(57.08,35.94)(0.13,-0.1){3}{\line(1,0){0.13}}
 \multiput(56.67,36.24)(0.2,-0.15){2}{\line(1,0){0.2}}
\multiput(56.27,36.52)(0.2,-0.14){2}{\line(1,0){0.2}}
 \multiput(55.85,36.8)(0.21,-0.14){2}{\line(1,0){0.21}}
\multiput(55.43,37.07)(0.21,-0.13){2}{\line(1,0){0.21}}
 \multiput(55,37.32)(0.21,-0.13){2}{\line(1,0){0.21}}
\multiput(54.57,37.56)(0.22,-0.12){2}{\line(1,0){0.22}}
\multiput(54.12,37.8)(0.22,-0.12){2}{\line(1,0){0.22}}
\multiput(53.68,38.02)(0.22,-0.11){2}{\line(1,0){0.22}}
\multiput(53.23,38.23)(0.23,-0.11){2}{\line(1,0){0.23}}
\multiput(52.77,38.43)(0.23,-0.1){2}{\line(1,0){0.23}}
\multiput(52.31,38.62)(0.23,-0.09){2}{\line(1,0){0.23}}
\multiput(51.84,38.79)(0.47,-0.18){1}{\line(1,0){0.47}}
\multiput(51.37,38.96)(0.47,-0.16){1}{\line(1,0){0.47}}
\multiput(50.9,39.11)(0.47,-0.15){1}{\line(1,0){0.47}}
\multiput(50.42,39.25)(0.48,-0.14){1}{\line(1,0){0.48}}
\multiput(49.94,39.38)(0.48,-0.13){1}{\line(1,0){0.48}}
\multiput(49.45,39.5)(0.48,-0.12){1}{\line(1,0){0.48}}
 \multiput(48.96,39.6)(0.49,-0.1){1}{\line(1,0){0.49}}
\multiput(48.47,39.7)(0.49,-0.09){1}{\line(1,0){0.49}}
\multiput(47.98,39.78)(0.49,-0.08){1}{\line(1,0){0.49}}
\multiput(47.49,39.84)(0.49,-0.07){1}{\line(1,0){0.49}}
 \multiput(46.99,39.9)(0.5,-0.06){1}{\line(1,0){0.5}}
\multiput(46.49,39.94)(0.5,-0.04){1}{\line(1,0){0.5}}
 \multiput(46,39.98)(0.5,-0.03){1}{\line(1,0){0.5}}
\multiput(45.5,39.99)(0.5,-0.02){1}{\vector(-1,0){0.5}}
 \multiput(45,40)(0.5,-0.01){1}{\line(1,0){0.5}}
\multiput(44.5,39.99)(0.5,0.01){1}{\line(1,0){0.5}}
 \multiput(44,39.98)(0.5,0.02){1}{\line(1,0){0.5}}
\multiput(43.51,39.94)(0.5,0.03){1}{\line(1,0){0.5}}
 \multiput(43.01,39.9)(0.5,0.04){1}{\line(1,0){0.5}}
\multiput(42.51,39.84)(0.5,0.06){1}{\line(1,0){0.5}}
 \multiput(42.02,39.78)(0.49,0.07){1}{\line(1,0){0.49}}
\multiput(41.53,39.7)(0.49,0.08){1}{\line(1,0){0.49}}
 \multiput(41.04,39.6)(0.49,0.09){1}{\line(1,0){0.49}}
\multiput(40.55,39.5)(0.49,0.1){1}{\line(1,0){0.49}}
 \multiput(40.06,39.38)(0.48,0.12){1}{\line(1,0){0.48}}
\multiput(39.58,39.25)(0.48,0.13){1}{\line(1,0){0.48}}
 \multiput(39.1,39.11)(0.48,0.14){1}{\line(1,0){0.48}}
\multiput(38.63,38.96)(0.47,0.15){1}{\line(1,0){0.47}}
 \multiput(38.16,38.79)(0.47,0.16){1}{\line(1,0){0.47}}
\multiput(37.69,38.62)(0.47,0.18){1}{\line(1,0){0.47}}
 \multiput(37.23,38.43)(0.23,0.09){2}{\line(1,0){0.23}}
\multiput(36.77,38.23)(0.23,0.1){2}{\line(1,0){0.23}}
 \multiput(36.32,38.02)(0.23,0.11){2}{\line(1,0){0.23}}
\multiput(35.88,37.8)(0.22,0.11){2}{\line(1,0){0.22}}
 \multiput(35.43,37.56)(0.22,0.12){2}{\line(1,0){0.22}}
\multiput(35,37.32)(0.22,0.12){2}{\line(1,0){0.22}}
 \multiput(34.57,37.07)(0.21,0.13){2}{\line(1,0){0.21}}
\multiput(34.15,36.8)(0.21,0.13){2}{\line(1,0){0.21}}
 \multiput(33.73,36.52)(0.21,0.14){2}{\line(1,0){0.21}}
\multiput(33.33,36.24)(0.2,0.14){2}{\line(1,0){0.2}}
 \multiput(32.92,35.94)(0.2,0.15){2}{\line(1,0){0.2}}
\multiput(32.53,35.64)(0.13,0.1){3}{\line(1,0){0.13}}
 \multiput(32.14,35.32)(0.13,0.11){3}{\line(1,0){0.13}}
\multiput(31.77,35)(0.13,0.11){3}{\line(1,0){0.13}}
 \multiput(31.4,34.66)(0.12,0.11){3}{\line(1,0){0.12}}
\multiput(31.04,34.32)(0.12,0.11){3}{\line(1,0){0.12}}
 \multiput(30.68,33.96)(0.12,0.12){3}{\line(1,0){0.12}}
  \multiput(30.34,33.6)(0.11,0.12){3}{\line(0,1){0.12}}
 \multiput(30,33.23)(0.11,0.12){3}{\line(0,1){0.12}}
\multiput(29.68,32.86)(0.11,0.13){3}{\line(0,1){0.13}}
 \multiput(29.36,32.47)(0.11,0.13){3}{\line(0,1){0.13}}
\multiput(29.06,32.08)(0.1,0.13){3}{\line(0,1){0.13}}
 \multiput(28.76,31.67)(0.15,0.2){2}{\line(0,1){0.2}}
\multiput(28.48,31.27)(0.14,0.2){2}{\line(0,1){0.2}}
 \multiput(28.2,30.85)(0.14,0.21){2}{\line(0,1){0.21}}
\multiput(27.93,30.43)(0.13,0.21){2}{\line(0,1){0.21}}
 \multiput(27.68,30)(0.13,0.21){2}{\line(0,1){0.21}}
\multiput(27.44,29.57)(0.12,0.22){2}{\line(0,1){0.22}}
 \multiput(27.2,29.12)(0.12,0.22){2}{\line(0,1){0.22}}
\multiput(26.98,28.68)(0.11,0.22){2}{\line(0,1){0.22}}
 \multiput(26.77,28.23)(0.11,0.23){2}{\line(0,1){0.23}}
\multiput(26.57,27.77)(0.1,0.23){2}{\line(0,1){0.23}}
 \multiput(26.38,27.31)(0.09,0.23){2}{\line(0,1){0.23}}
\multiput(26.21,26.84)(0.18,0.47){1}{\line(0,1){0.47}}
 \multiput(26.04,26.37)(0.16,0.47){1}{\line(0,1){0.47}}
\multiput(25.89,25.9)(0.15,0.47){1}{\line(0,1){0.47}}
 \multiput(25.75,25.42)(0.14,0.48){1}{\line(0,1){0.48}}
\multiput(25.62,24.94)(0.13,0.48){1}{\line(0,1){0.48}}
 \multiput(25.5,24.45)(0.12,0.48){1}{\line(0,1){0.48}}
\multiput(25.4,23.96)(0.1,0.49){1}{\line(0,1){0.49}}
 \multiput(25.3,23.47)(0.09,0.49){1}{\line(0,1){0.49}}
\multiput(25.22,22.98)(0.08,0.49){1}{\line(0,1){0.49}}
 \multiput(25.16,22.49)(0.07,0.49){1}{\line(0,1){0.49}}
\multiput(25.1,21.99)(0.06,0.5){1}{\line(0,1){0.5}}
 \multiput(25.06,21.49)(0.04,0.5){1}{\line(0,1){0.5}}
\multiput(25.02,21)(0.03,0.5){1}{\line(0,1){0.5}}
 \multiput(25.01,20.5)(0.02,0.5){1}{\line(0,1){0.5}}
\multiput(25,20)(0.01,0.5){1}{\line(0,1){0.5}}
 \thinlines \put(35,36){\line(0,1){2}}
\put(61,31){\line(0,1){2}}
 \put(36,35){$\alpha$}
  \put(58,29){$\beta$}
  \put(3,22){\makebox(0,0)[cc]{$\infty$}}
\put(25,19){\line(0,1){2}}
 \put(65,19){\line(0,1){2}}
 \put(25,17){\makebox(0,0)[cc]{$P_l$}}
\put(65,17){\makebox(0,0)[cc]{$P_r$}}

\end{picture}
\caption{The path of $\alpha(\tau)$ for $\RE \alpha < \RE \beta$.}
\end{center}
\end{figure}

\textit{Case 3.} $\RE \alpha = \RE \beta$. Then it follows from \eqref{ZWEI} and \eqref{DREI} that
in the case $\IM \beta < \IM \alpha$
\[
 Q'(x) <0, \quad x > P;  \quad Q'(x) > 0, \quad x < P,
\]
and in the case $\IM \beta > \IM \alpha$
\[
 Q'(x) >0, \quad x > P;  \quad Q'(x) < 0, \quad x < P.
\]
The direction of the path is indicated in each of these cases in Figure 10.

\begin{figure}[hbt]\label{nocircles}
\begin{center}
\begin{picture}(105,32)(0,13)

\thinlines \put(0,20){\line(1,0){50}}

 \put(24,27){\line(2,0){2}}
 \put(23,27){\makebox(0,0)[cc]{$\beta$}}
 \put(24,33){\line(2,0){2}}
 \put(23,33){\makebox(0,0)[cc]{$\alpha$}}
 \thicklines
                     \put(50,20){\vector(-1,0){15}}
                       \put(35,20){\line(-1,0){10}}
                       \put(25,20){\vector(0,1){4}}
                       \put(25,24){\vector(0,1){13}}
                       \put(25,37){\line(0,1){3}}

\thinlines
 \put(55,20){\line(1,0){50}}

 \put(79,27){\line(2,0){2}}
 \put(78,27){\makebox(0,0)[cc]{$\alpha$}}
 \put(79,33){\line(2,0){2}}
 \put(78,33){\makebox(0,0)[cc]{$\beta$}}
  \thicklines
                     \put(80,40){\vector(0,-1){4}}
                       \put(80,36){\vector(0,-1){13}}
                       \put(80,23){\line(0,-1){3}}
                       \put(70,20){\line(-1,0){15}}
                       \put(80,20){\vector(-1,0){10}}

 \put(25,17){\makebox(0,0)[cc]{$P$}}
 \put(80,17){\makebox(0,0)[cc]{$P$}}
\end{picture}
\caption{The path of $\alpha(\tau)$ for $\RE \alpha =\RE \beta$ when $\IM \alpha > \IM \beta$ and
when $\IM \beta > \IM \alpha$.}
\end{center}
\end{figure}

\begin{remark}
It follows from the above three cases that the point $\infty$ belongs to $\mathcal{F}_Q$ if and only if $\RE
\alpha \le \RE \beta$. This can be seen directly. It follows from \eqref{x_00} 
that $Q_\tau(z)$ has $\infty$ as GZNT if and only if
\[
\lim_{z \wh\to \infty} z Q_\tau(z) \in [0,\infty).
\]
In this case  $Q_\tau(z) \to 0$ when $z \wh\to  \infty$. Since $Q(z) \to 1$ as $z \wh\to \infty$ this can only take place for $\tau =1$, in which case
\[
\lim_{z \wh\to \infty} z Q_\tau(z)=\RE \beta -\RE \alpha,
\]
from which the assertion follows.
\end{remark}

\begin{remark}
The present treatment of the function $Q(z)$ in \eqref{einz-}  can be seen as an illustration of Theorem \ref{mainth} and Theorem \ref{realpath}.
For this purpose assume (without loss of generality) that $\alpha \in \dR$. When $\RE \beta < \alpha$ the point $P_r$ coincides with $\alpha$ and when $\RE \beta > \alpha$ the point $P_l$ coincides with $\alpha$; if $\RE \beta=\alpha$, then
 the point $P$ coincides with $\alpha$ (and $\IM \beta > 0$).
Hence the path hits $\dR$ at $\alpha$ in a perpendicular way (cf. Theorem \ref{mainth}). Afterwards it stays on the real line  (cf. Theorem \ref{realpath} with $M(z)=1 \in \mathbf{N}_0$). 
\end{remark}

\section{Path entirely on the extended real line}\label{ontherealline}

There exist functions $Q(z) \in \mathbf{N}_1$ whose GZNT $\alpha(\tau)$ stays on the (extended) real line.
These functions will be classified in this section.

\begin{example}
Let $\alpha, \beta \in \dR$ and $c \in \dR$ such that $c(\alpha-\beta) < 0$ and define the function $Q(z)$ by
\begin{equation}\label{R0}
 Q(z)=c\,\frac{z-\alpha}{z-\beta}, 
  \quad z \neq \beta. 
\end{equation}
Then $Q(z)$ belongs to  $\mathbf{N}_1$;  actually,  one has
\begin{equation}\label{R0+}
 Q(z) =\frac{(z-\alpha)^2}{(z-\beta)^2}\,\,M(z),
 \quad
 M(z)=c + \frac{c(\alpha-\beta)}{z-\alpha},
\end{equation}
where $M(z)$ belongs to $\mathbf{N}_0$

It follows from \eqref{R0+} that
$M(\alpha+)=\infty$, $M(\alpha-)=-\infty$.
Hence, according to Theorem \ref{realpath}, there exists $\varepsilon > 0$ such that the intervals
$(\alpha-\varepsilon, \alpha)$ and $(\alpha, \alpha+\varepsilon)$ belong to $\cF_Q$. However, in this case one can obtain stronger results. 
Observe that the GZNT  $\alpha(\tau)$ of $Q_\tau(z)$, $\tau \in \dR \cup \{\infty\}$, is given by
\[
 \alpha(\tau)=\frac{c \alpha-\tau \beta}{c-\tau},
 \quad \tau \in \dR\setminus\left\{c\right\},
\]
and, in the remaining cases, by 
\[
 \alpha(\infty)=\beta, \quad \alpha(c)=\infty. 
\]
Therefore the path  $\mathcal{F}_Q$ covers the extended real line.
Moreover, note that for the functions 
\begin{equation}\label{Rc}
Q(z) =\frac{d}{z-\gamma}, 
\quad \gamma\in\dR,\quad  d>0,
\end{equation}
and
\begin{equation}\label{R1c}
Q(z)=\frac{\gamma-z}{d}, 
\quad \gamma\in\dR,\quad d>0,
\end{equation}
one has  $\mathcal{F}_Q=\dR\cup\{\infty\}$ as well.
\end{example}

In fact, the functions in \eqref{R0}, \eqref{Rc}, and \eqref{R1c} are the only functions in $\mathbf{N}_1$
whose path of the GZNT stays on the (extended) real line. In order to prove this, the following lemma is
needed.

\begin{lemma}\label{kuhkuh}
Let $\al, \be \in \dR$ with $\al  \not=\be$.  Assume that $U(z)$ and $V(z)$ are nontrivial 
Nevanlinna functions which
satisfy one of the following relations 
\begin{equation}\label{q1q0}
U(z)=-\frac{(z-\al)^2 }{(z-\be)^{2}}\,V(z),
\end{equation}
\begin{equation}\label{q1q0a}
U(z)=-\frac{1}{(z-\be)^{2}}\,V(z),
\end{equation}
or
\begin{equation}\label{q1q0b}
U(z)=-(z-\al)^2 \,V(z).
\end{equation}
 Then the functions $U(z)$ and $V(z)$ are of the form
\begin{equation}\label{q1q0rep}
U(z)=c\left(1-\frac{\be-\al}{\be-z}\right), \quad V(z)=c\left(-1+\frac{\al-\be}{\al-z}\right), \quad
c(\be-\al) < 0,
\end{equation}
\begin{equation}\label{q1q0repa}
U(z)=\frac{d}{\be-z}\, \quad V(z)=d (z-\beta), \quad d>  0,
\end{equation}
or
\begin{equation}\label{q1q0repb}
U(z)= e (z-\alpha), \quad V(z)= \frac{e}{\alpha-z}, \quad e> 0,
\end{equation}
respectively. Conversely, all functions of the form \eqref{q1q0rep}, \eqref{q1q0repa}, or \eqref{q1q0repb}
are Nevanlinna functions and they satisfy \eqref{q1q0}, \eqref{q1q0a}, and \eqref{q1q0b}, respectively.
\end{lemma}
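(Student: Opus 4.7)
The plan is to use the functional relation to force the spectral measures of $U$ and $V$ to be atomic and concentrated on $\{\alpha,\beta\}$, then match coefficients via a partial-fraction expansion. I describe Case \eqref{q1q0} in detail; Cases \eqref{q1q0a} and \eqref{q1q0b} follow a simpler version of the same template.

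Set $f(z)=(z-\alpha)^2/(z-\beta)^2$, so that $U=-fV$, and fix $x_0\in\dR\setminus\{\alpha,\beta\}$, where $f$ is continuous and strictly positive. If the spectral measure $\tau$ of $V$ had an atom at $x_0$, then $V(z)\sim c/(x_0-z)$ near $x_0$ with $c>0$ and the relation would give $U(z)\sim f(x_0)c/(z-x_0)$, a simple real pole of $U$ whose residue has the wrong sign for $\mathbf{N}_0$. So $\tau$ has no atom at $x_0$, and then the expansion at $z=x_0+iy$ (using continuity of $f$ and boundedness of $\RE V$) yields $\IM U(x_0+iy)=-f(x_0)\,\IM V(x_0+iy)+O(y)$; combined with $\IM U,\IM V\ge 0$ and $f(x_0)>0$, this forces $\IM V(x_0+iy)\to 0$ as $y\downarrow 0$. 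Theorem~\ref{limits} applied on any subinterval of $\dR\setminus\{\alpha,\beta\}$ then shows that $V$ (and symmetrically $U$) is holomorphic there. Dividing the relation by $z$ and letting $z\wh\to\infty$ gives $b_U=-b_V$, which together with $b_U,b_V\ge 0$ yields $b_U=b_V=0$. Hence
\[
V(z)=\tilde a+\frac{c_\alpha}{\alpha-z}+\frac{c_\beta}{\beta-z},\qquad U(z)=\tilde a_U+\frac{d_\alpha}{\alpha-z}+\frac{d_\beta}{\beta-z},
\]
with $c_\alpha,c_\beta,d_\alpha,d_\beta\ge 0$.

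Next, if $d_\alpha>0$ then $U\sim d_\alpha/(\alpha-z)$ near $\alpha$ and the inverted relation $V=-(z-\beta)^2(z-\alpha)^{-2}U$ gives $V\sim(\beta-\alpha)^2 d_\alpha/(z-\alpha)^3$, a cubic real pole incompatible with $V\in\mathbf{N}_0$; so $d_\alpha=0$, and symmetrically $c_\beta=0$. Substituting $V=\tilde a+c_\alpha/(\alpha-z)$ and $U=\tilde a_U+d_\beta/(\beta-z)$ into $U=-fV$ and expanding about $\beta$ via $(z-\alpha)=(z-\beta)+(\beta-\alpha)$, the $(z-\beta)^{-2}$ coefficient forces $c_\alpha=\tilde a(\beta-\alpha)$, the $(z-\beta)^{-1}$ coefficient then gives $d_\beta=c_\alpha$, and the constant term gives $\tilde a_U=-\tilde a$. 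Writing $k:=-\tilde a$ recovers \eqref{q1q0rep}, with $c_\alpha\ge 0$ (strict for nontriviality) translating to $k(\beta-\alpha)<0$.

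For Case \eqref{q1q0b}, $U=-(z-\alpha)^2V$, the same support argument localises $\tau$ to $\{\alpha\}$, so $V(z)=\tilde a+bz+c/(\alpha-z)$; growth of $-(z-\alpha)^2V$ at infinity then forces $b=0$ and $\tilde a=0$, leaving $V(z)=c/(\alpha-z)$ and $U(z)=c(z-\alpha)$ with $c>0$. Case \eqref{q1q0a} is the dual under $U\leftrightarrow V$, $\alpha\leftrightarrow\beta$. The converse in each case is a direct computation. The main obstacle is the support-reduction step in Case \eqref{q1q0}: ruling out atoms of $\tau,\sigma$ on $\dR\setminus\{\alpha,\beta\}$ by residue-sign analysis and singular-continuous mass by vanishing of the boundary imaginary part together with Theorem~\ref{limits}; once the measures are confined to $\{\alpha,\beta\}$ and the linear parts vanish, the rest is mechanical partial-fraction matching at $\beta$.
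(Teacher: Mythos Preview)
Your proof is correct and reaches the same conclusion, but the support--reduction step takes a different route from the paper's. The paper rewrites \eqref{q1q0} as $(z-\beta)^2U(z)=-(z-\alpha)^2V(z)$ and applies the Stieltjes inversion formula directly to obtain $(s-\beta)^2\,d\sigma_1(s)=-(s-\alpha)^2\,d\sigma_0(s)$; positivity of both measures then forces $\supp\sigma_1\subset\{\beta\}$ and $\supp\sigma_0\subset\{\alpha\}$ in one stroke. Your argument is more hands-on: you exclude atoms of $V$ off $\{\alpha,\beta\}$ by a residue--sign contradiction, then kill the remaining mass by showing $\IM V(x_0+iy)\to 0$ and invoking Theorem~\ref{limits}, and finally remove the spurious atoms ($d_\alpha$ of $U$ at $\alpha$, $c_\beta$ of $V$ at $\beta$) by a pole--order count. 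This buys you a more elementary argument that does not rely on justifying Stieltjes inversion for products with rational factors, at the cost of an extra step. After the supports are localized, the paper uses $U(\alpha)=V(\beta)=0$ together with $\lim_{z\to\infty}U=-\lim_{z\to\infty}V$ to determine the constants, whereas you do the equivalent Laurent matching at~$\beta$; both are routine.

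One small inaccuracy to fix: in your expansion $\IM U(x_0+iy)=-f(x_0)\IM V(x_0+iy)+O(y)$ you invoke ``boundedness of $\RE V$'', which need not hold. What you actually have, once the atom at $x_0$ is excluded, is $(x_0-z)V(z)\to 0$ nontangentially, hence $y\,\RE V(x_0+iy)\to 0$ and $y|V(x_0+iy)|\to 0$. Writing $f(x_0+iy)=f(x_0)+iyf'(x_0)+O(y^2)$ then gives
\[
\IM U(x_0+iy)+f(x_0)\,\IM V(x_0+iy)=-yf'(x_0)\,\RE V(x_0+iy)+O\bigl(y^2|V(x_0+iy)|\bigr)\to 0,
\]
and since both terms on the left are nonnegative each tends to~$0$. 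The conclusion you draw is correct; only the justification needs this adjustment.
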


\begin{proof}
Assume that $U(z)$ and $V(z)$ are Nevanlinna functions. Let the integral representations of $U(z)$ and $V(z)$ be of the
form \eqref{nev'} with spectral measures $\sig_1$, $\sig_0$,  and constants $a_1 \in \dR$, $b_1 \ge 0$, $a_0
\in \dR$, $b_0 \ge 0$, respectively.

Assume that $U(z)$ and $V(z)$  satisfy \eqref{q1q0}. Then the identity \eqref{nev+} implies
that 
$$
b_1=\lim_{z\wh\to\infty}\frac{U(z)}{z} =-\lim_{z\wh\to\infty}\frac{V(z)}{z}  =-b_0.
$$
Since both $b_0$ and $b_1$ are nonnegative, it follows that
\begin{equation}\label{b0b1=0}
b_0=b_1=0.
\end{equation}
The Stieltjes inversion formula implies that
$$
(s-\be)^2 d\sig_1(s)=-(s-\al)^2 d\sig_0(s).
$$
It follows that $\supp \sig_1\subset\{\be\}$ and $\supp \sig_0\subset\{\al\}$. If $\sig_1=0$ then by
\eqref{b0b1=0} the function $V(z)$ is equal to a real constant, and the identity \eqref{q1q0} implies that
$U(z)=V(z)=0$. 
The same conclusion holds if $\sig_0=0$. Therefore, it may be assumed that $U(z)$ and $V(z)$
have representations of the form
$$
U(z)=a_1+\frac{d_1}{\be-z}, \quad V(z)=a_0+\frac{d_0}{\al-z},
$$
with $d_0,~d_1>0$. The identity \eqref{q1q0} implies further that $U(\al)=V(\be)=0$, so that
$$
a_1+\frac{d_1}{\be-\al}=a_0+\frac{d_0}{\al-\be}=0.
$$
Thus  $a_1(\be-\al) < 0$, $a_0(\be-\al) >  0$, and
$$
U(z)=a_1\left(1-\frac{\be-\al}{\be-z}\right), \quad V(z)=a_0\left(1-\frac{\al-\be}{\al-z}\right).
$$
This and $\eqref{q1q0}$ imply that 
$$
a_1=\lim_{z\wh\to\infty}U(z)=-\lim_{z\wh\to\infty}V(z)=-a_0,
$$
and the representations in $\eqref{q1q0rep}$ follow with $c=a_1$.

Now assume that $U(z)$ and $V(z)$  satisfy \eqref{q1q0a}. The identity \eqref{nev+}  
implies that
$$
b_1=\lim_{z \wh\to\infty}\frac{U(z)}{z} = 0.
$$
The Stieltjes inversion formula implies that
$$
(s-\be)^2 d\sig_1(s)=-d\sig_0(s),
$$
which leads to $\supp \sig_1\subset\{\be\}$ and $\supp \sig_0= \emptyset$. 
Therefore, it follows that $U(z)$ and $V(z)$ have representations of the form
$$
U(z)=a_1+\frac{d_1}{\be-z}, \quad V(z)=b_0z+a_0,
$$
with $d_1>0$.  The identity \eqref{q1q0a} implies that $V(\beta)=0$, so that $b_0 \beta+a_0=0$.  Hence
$V(z)=b_0(z-\beta)$ and, again by \eqref{q1q0a}, 
\[
 a_1+\frac{d_1}{\beta-z}=-\frac{b_0}{z-\beta}.
\]
Hence $a_1=0$ and
\[
 U(z)=\frac{d}{\beta-z}, \quad V(z)=-(z-\beta)^2 U(z)=d (z-\beta), \quad d>0.
\]

The treatment of the case where  the functions $U(z)$ and $V(z)$  satisfy \eqref{q1q0b} is similar to what has been
just shown. It also follows by symmetry from the previous case.

As to the converse statement: it is straightforward to check that the functions in  $\eqref{q1q0rep}$,
$\eqref{q1q0repa}$, or $\eqref{q1q0repb}$, are Nevanlinna functions which satisfy the identities
\eqref{q1q0}, \eqref{q1q0a}, or \eqref{q1q0b}, respectively.
\end{proof}

The   case $\alpha=\beta$ excluded in Lemma \ref{kuhkuh} concerns Nevanlinna functions
$U(z)$ and $V(z)$ which satisfy $U(z)=-V(z)$. Of course, this implies that $U(z)=c$ and $V(z)=-c$, where $c$
is a real constant.

\begin{theorem}\label{ontheline:t}
Let $Q(z) \in \mathbf{N}_1$ have the property that the path $\mathcal{F}_Q$ is contained in the extended real
line.
\begin{enumerate}\def\labelenumi{\rm (\roman{enumi})}

\item If both the GZNT and the GPNT of $Q(z)$ are finite, then $Q(z)$ is of the form
\eqref{R0} with some $\al,\be\in\dR$ and $c\in\dR$ such that  $c(\alpha-\beta) < 0$.

\item If the GZNT of $Q(z)$ is at $\infty$ and the GPNT of $Q(z)$ is finite,
then $Q(z)$ is of the form \eqref{Rc}. 

\item If the GZNT of $Q(z)$ is finite and the GPNT of $Q(z)$ is at $\infty$,
then $Q(z)$ is of the form \eqref{R1c}.  
\end{enumerate}
\end{theorem}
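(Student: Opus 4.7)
The plan is to show first that the hypothesis $\mathcal{F}_Q\subseteq\dR\cup\{\infty\}$ forces $-Q\in\mathbf{N}_0$; the three conclusions then follow by applying Lemma \ref{kuhkuh} to the factorization $-Q=-R\cdot M$ provided by Theorem \ref{factor}, whose three cases for the rational factor $R$ correspond exactly to the three cases (i), (ii), (iii) of the theorem.

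First I verify that $Q$ has no zero and no pole in $\dC^+$. The unique GZNT of $Q$ is $\alpha=\alpha(0)$ and the unique GPNT is $\beta=\alpha(\infty)$; by hypothesis both lie in $\mathcal{F}_Q\subseteq\dR\cup\{\infty\}$. Any zero of $Q$ in $\dC^+$ would by definition be a GZNT, and any pole of $Q$ in $\dC^+$ would be a zero of $-1/Q$ in $\dC^+$, hence a GPNT; both possibilities are excluded. Therefore $Q$ is holomorphic and non-vanishing on $\dC^+$. Corollary \ref{quh} then gives $\{z\in\dC^+:\IM Q(z)=0\}\subseteq\mathcal{F}_Q\cap\dC^+=\emptyset$, so the continuous function $\IM Q$ has a fixed sign on the connected set $\dC^+$. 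Since $Q\in\mathbf{N}_1$ cannot simultaneously belong to $\mathbf{N}_0$, the sign $\IM Q>0$ is ruled out, and $\IM Q<0$ throughout $\dC^+$. Extending by the symmetry $Q(\bar z)=\overline{Q(z)}$, the function $-Q$ is holomorphic on $\dC\setminus\dR$, symmetric, and satisfies $\IM(-Q)\cdot\IM z\ge 0$; hence $-Q\in\mathbf{N}_0$.

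By Theorem \ref{factor} write $Q=R\cdot M$ with $M\in\mathbf{N}_0$ nontrivial and $R$ one of the three rational functions in \eqref{einz}. Both $-Q$ and $M$ are then nontrivial Nevanlinna functions related by $-Q=-R\cdot M$, so Lemma \ref{kuhkuh} applies directly. In case (i), $\alpha,\beta\in\dR$, the relation \eqref{q1q0} holds and the formulas \eqref{q1q0rep} simplify to $-Q(z)=c(z-\alpha)/(z-\beta)$ with $c(\beta-\alpha)<0$; setting $c'=-c$ gives $Q(z)=c'(z-\alpha)/(z-\beta)$ with $c'(\alpha-\beta)<0$, which is \eqref{R0}. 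In case (ii), relation \eqref{q1q0a} and its solution \eqref{q1q0repa} give $Q(z)=d/(z-\beta)$ with $d>0$, which is \eqref{Rc} with $\gamma=\beta$. In case (iii), relation \eqref{q1q0b} and its solution \eqref{q1q0repb} give $Q(z)=e(\alpha-z)$ with $e>0$, which is \eqref{R1c} with $\gamma=\alpha$ and $1/e$ in place of $d$.

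The main obstacle is the first step, specifically ruling out poles of $Q$ in the open upper half-plane: one must recognize an ordinary pole of the meromorphic function $Q$ in $\dC^+$ as a GPNT there, and then invoke the uniqueness of the GPNT together with $\alpha(\infty)\in\dR\cup\{\infty\}$. Once $-Q$ has been identified as a genuine Nevanlinna function, the three-case analysis via Lemma \ref{kuhkuh} is essentially automatic.
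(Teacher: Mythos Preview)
Your argument is correct and follows essentially the same route as the paper: both proofs use Corollary~\ref{quh} together with the fact that $Q\in\mathbf{N}_1\setminus\mathbf{N}_0$ to conclude $\IM Q<0$ on $\dC^+$, and then feed $U=-Q$, $V=M$ into Lemma~\ref{kuhkuh} according to the three cases of Theorem~\ref{factor}. Your write-up is in fact a bit more careful than the paper's in that you explicitly rule out poles of $Q$ in $\dC^+$ (needed for the continuity argument) and verify the sign of the constant in each case.
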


\begin{proof}
(i) 
The assumption that $Q_{\tau}(z)$ has no GZNT in $\dC^+$ for all $\tau\in\dR\cup
\{\infty\}$ implies that $\Im Q(z)\not=0$ in $\dC^+$; cf. Corollary \ref{quh}.  Since $Q(z) \in\mathbf{N}_1$,
there is at least one point $z_0\in\dC^+$ with $\Im Q(z_0)<0$. However,  $\Im Q(z)$ is a continuous function
on $\dC^+$, so that $\Im Q(z)<0$ on $\dC^+$. Observe that the functions $U(z)=-Q(z)$ and $V(z)=M(z)$ are
nontrivial Nevanlinna functions which satisfy $\eqref{q1q0}$. Therefore, by Lemma \ref{kuhkuh}
\[
 Q(z)=-U(z)=c \frac{z-\alpha}{z-\beta},
\]
which completes the proof of (i). 

(ii) \& (iii) The remaining parts of the theorem can be proved in a similar way, using the
respective parts of Lemma \ref{kuhkuh}. 
\end{proof}

\end{document}